\newtheorem{thm}{Theorem}[section]
\newtheorem{prop}{Proposition}[section]
\newtheorem{lem}{Lemma}[section]
\newtheorem{cor}{Corollary}[section]
\newtheorem{assumption}{Assumption}[section]
\newtheorem{rem}{Remark}[section]
\theoremstyle{notation}
\newcommand{\R}{\mathbb{R}}
\numberwithin{equation}{section}
\newcommand{\N}{\mathbb{N}}
\newcommand{\Z}{\mathbb{Z}}
\newcommand{\eps}{\epsilon}
\newcommand{\wto}{\rightharpoonup}
\makeatletter \@addtoreset{equation}{section} \makeatother
\newcounter{const}
\author[T. Gou, X. Shen]{Tianxiang Gou$^{1}$, Xiaoan Shen$^{2}$}
\address[T. Gou]
{$^{1}$ School of Mathematics and Statistics, Xi’an Jiaotong University, 710049, Xi’an, Shaanxi, China}
\email{tianxiang.gou@xjtu.edu.cn}
\address[X. Shen]
{$^{2}$ Department of Mathematics, Statistics, and Computer Science, M/C 249, University of Illinois at Chicago, 851 S. Morgan Street, Chicago, IL 60607, USA}
\email{xshen30@uic.edu}
\subjclass[2010]{35A15; 35Q41; 35R35}
\keywords{Prescribed Angular Momentum, Orbital Stability, Blow-up, Rotating Bose-Einstein Condensates, Variational Methods}
\title[Nonlinear Bound States]{Nonlinear bound states with prescribed angular momentum in the mass supercritical regime}
\date{\today}
	\thanks{{\it Acknowledgments}. T. Gou was supported by the National Natural Science Foundation of China (No. 12101483 $\&$ 12471113). The authors warmly thank Profs. I. Nenciu and C. Sparber for helpful discussions. {The authors also warmly thank the knowledgeable referee for constructive comments and suggestions.}}
\begin{document}
	
	\begin{abstract}
		In this paper, we consider the existence, orbital stability/instability and regularity of bound state solutions to nonlinear Schr\"odinger equations with super-quadratic confinement in two and three spatial dimensions for the mass supercritical case. Such solutions, which are given by time-dependent rotations of a non-radially symmetric spatial profile, correspond to critical points of the underlying energy function restricted on the double constraints consisting of the mass and the angular momentum. The study exhibits new pictures for rotating Bose-Einstein condensates within the framework of Gross-Pitaevskii theory. It is proved that there exist two non-radially symmetric solutions, one of which is local minimizer and the other is mountain pass type critical point of the underlying energy function restricted on the constraints. Moreover, we derive conditions that guarantee that local minimizers are regular, the set of those is orbitally stable and mountain pass type solutions are strongly unstable. The results extend and complement the recent ones in \cite{NSS}, where the consideration is undertaken in the mass subcritical case.
	\end{abstract}

	\maketitle
	
	\thispagestyle{empty}
	
	\section{Introduction}
	
	In this paper, we are concerned with solutions to the following focusing nonlinear Schr\"odinger equation (NLS),
	\begin{align} \label{equs}
		\left\{
		\begin{aligned}
			&\textnormal{i} \partial_t u=H u - \lambda |u|^{2 \sigma} u, \\
			& u(0)=u_0,
		\end{aligned}
		\right.  
	\end{align}
	where $d=2$ or $d=3$, $\lambda >0$, $0<\sigma<\frac{2}{(d-2)^+}$ and
	$$
	H:=-\frac 12 \Delta + V(x).
	$$
	Equation \eqref{equs} arises in various fields of physics, such as laser propagation or Bose-Einstein Condensation, see for example \cite{PS, SS}. Here the potential $V$ is a smooth confining potential, which is assumed to grow super-quadratically at infinity to ensure a strong confinement. More precisely, we assume that $V$ satisfies the following assumption.
	
	\begin{assumption} \label{a1}
		The potential $V \in C^{\infty}(\R^d,\R)$ is assumed to be radially symmetric and confining, i.e. $V(x) \to +\infty$ as $|x| \to \infty$. Moreover, there exist $k >2$ and $R>0$ such that for $|x|>R$,
		$$
		c_{\alpha} \langle x \rangle^{k-|\alpha|} \leq |\partial^{\alpha} V(x)| \leq C_{\alpha} \langle x \rangle^{k-|\alpha|}, \quad \forall \,\, \alpha \in \N^d,
		$$
		where $\langle x \rangle:=(1+|x|^2)^{1/2}$ and $c_{\alpha}, C_{\alpha}>0$ are constants. In addition, we assume that 
		\begin{align} \label{v}
		     \langle \nabla V(x), x \rangle > 0, \quad x \in \R^d \backslash \{0\}.
		\end{align}
	\end{assumption}
	
	{A typical example for an admissible potential satisfying Assumption \eqref{a1} is given by $V(x)=|x|^k$ for $k>2$.}
	
	\begin{rem}
	The assumption that the potential $V$ grows super-quadratically at infinity is to guarantee the compactness of sequences in the context of the double constraints. This is distinct with the discussion under the single mass constraint, which will be interpreted in the forthcoming parts.
	\end{rem}
	
	For simplicity, we shall assume that $V(x) \geq 0$ for any $x \in \R^d$. Indeed, in view of Assumption \ref{a1}, the potential $V$ is bounded from below. Hence $V \geq 0$ can be achieved by a simple gauge transform. 
	
	Here we shall denote the corresponding energy space by $\mathcal{H}^1$, which is defined by the completion of $C^{\infty}_0(\R^d)$ under the norm
	$$
	\|u\|_{\mathcal{H}^1}:=\|u\| + \|u\|_2, \quad \|u\|:=\|\nabla u\|_2 + \|V^{1/2} u\|_2,
	$$
	where
	$$
	\|u\|_p:=\left(\int_{\R^d} |u|^p \,dx \right)^{\frac 1p}, \quad 1 \leq p<+\infty.
	$$
	
	Observe that $\mathcal{H}^1 \subset H^1(\R^d)$, where ${H}^1(\R^d)$ is the usual Sobolev space defined by the completion of $C^{\infty}_0(\R^d)$ under the norm
	$$
	\|u\|_{{H}^1}:=\|\nabla u\| _2+ \|u\|_2.
	$$
	Accordingly, by using the well-known Gagliardo-Nirenberg's inequality in $H^1(\R^d)$ (see \cite{Ga, Ni}), we have the following result.
	
	\begin{lem} Let $0<\sigma<\frac{2}{(d-2)_+}$. Then there exists $C=C(d,\sigma)>0$ such that, for any $u \in \mathcal{H}^1$, 
		\begin{align}\label{gn}
			\|u\|_{2 \sigma +2}^{2 \sigma +2} \leq C \|\nabla u\|_2^{d\sigma} \|u\|_2^{\sigma(2-d) +2}.
		\end{align}
	\end{lem}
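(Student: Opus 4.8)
The plan is to reduce the claim to the classical Gagliardo--Nirenberg inequality on $H^1(\R^d)$, exploiting the continuous inclusion $\mathcal{H}^1 \subset H^1(\R^d)$ noted just above the statement. First I would recall the standard interpolation inequality in the form: for $0<\sigma<\frac{2}{(d-2)_+}$ there is a constant $C=C(d,\sigma)>0$ such that
\begin{align*}
\|v\|_{2\sigma+2} \leq C \|\nabla v\|_2^{\theta}\, \|v\|_2^{1-\theta}, \qquad \theta := \frac{d\sigma}{2\sigma+2},
\end{align*}
holds for every $v \in H^1(\R^d)$. The restriction on $\sigma$ is exactly what guarantees that the exponent $2\sigma+2$ stays strictly below the critical Sobolev exponent $2^\ast=\frac{2d}{(d-2)_+}$, so that $\theta \in [0,1)$ and the inequality is meaningful; this is the classical result of \cite{Ga, Ni}.

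Next I would raise both sides to the power $2\sigma+2$ and verify that the exponents reproduce the claimed ones. A direct computation gives $\theta(2\sigma+2)=d\sigma$ and $(1-\theta)(2\sigma+2)=(2\sigma+2)-d\sigma=\sigma(2-d)+2$, so that
\begin{align*}
\|v\|_{2\sigma+2}^{2\sigma+2} \leq C \|\nabla v\|_2^{d\sigma}\, \|v\|_2^{\sigma(2-d)+2}
\end{align*}
for all $v \in H^1(\R^d)$.

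Finally, since every $u \in \mathcal{H}^1$ also belongs to $H^1(\R^d)$ by the embedding recorded before the statement, the displayed inequality applies verbatim with $v=u$, which is precisely \eqref{gn}. There is no substantive obstacle here: the only points demanding care are the exponent bookkeeping and the observation that the admissible range of $\sigma$ coincides with the subcritical range for which the classical inequality is valid. The confining potential $V$ plays no role beyond securing the inclusion $\mathcal{H}^1 \subset H^1(\R^d)$, and in particular the constant $C$ depends only on $d$ and $\sigma$.
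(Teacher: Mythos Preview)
Your proposal is correct and is exactly the approach the paper takes: the paper does not give a proof of this lemma at all but simply records it as an immediate consequence of the classical Gagliardo--Nirenberg inequality in $H^1(\R^d)$ together with the inclusion $\mathcal{H}^1 \subset H^1(\R^d)$. Your exponent bookkeeping is accurate, and nothing more is needed.
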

	
	We also present the relevant compact embedding in $\mathcal{H}^1$, which plays an important role in the discussion of the compactness of sequences.
	
	\begin{lem} \label{cembedding} \cite[Lemma 3.1]{Zh}
		Let  $0 \leq \sigma<\frac{2}{(d-2)_+}$. Then the embedding $\mathcal{H}^1 \hookrightarrow L^{2\sigma +2}(\R^d)$ is compact.
	\end{lem}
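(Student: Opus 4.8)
The plan is to prove the compact embedding by establishing sequential compactness directly: given a sequence $(u_n)$ bounded in $\mathcal{H}^1$, I would extract a subsequence converging strongly in $L^{2\sigma+2}(\R^d)$. The argument combines local compactness, supplied by the Rellich--Kondrachov theorem, with a uniform control of the mass at infinity, supplied by the confining potential $V$. This reproves the cited result of \cite{Zh} by a self-contained concentration-compactness-free argument.

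First, since $\mathcal{H}^1 \hookrightarrow H^1(\R^d)$ continuously, boundedness of $(u_n)$ in $\mathcal{H}^1$ yields, after passing to a subsequence, a weak limit $u_n \wto u$ in $H^1(\R^d)$ (hence also in $\mathcal{H}^1$). Because $2 \le 2\sigma+2 < \frac{2d}{(d-2)_+} =: 2^*$, the Rellich--Kondrachov theorem gives, for every fixed $R>0$, strong convergence $u_n \to u$ in $L^{2\sigma+2}(B_R)$, where $B_R$ is the ball of radius $R$ centered at the origin.

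Second, I would bound the contribution from $\{|x|>R\}$ uniformly in $n$. Set $m_R := \inf_{|x|>R} V(x)$; by Assumption \ref{a1} one has $m_R \to +\infty$ as $R \to \infty$. Since $\int_{\R^d} V|u_n|^2\,dx \le \|u_n\|^2$ is uniformly bounded by some constant $C$, it follows that
\begin{align*}
\int_{|x|>R} |u_n|^2\,dx \le \frac{1}{m_R} \int_{|x|>R} V|u_n|^2\,dx \le \frac{C}{m_R} \longrightarrow 0
\end{align*}
uniformly in $n$ as $R \to \infty$. For $\sigma=0$ this already controls the tail. For $\sigma>0$ I would interpolate between $L^2$ and a higher Lebesgue space: fixing any $q$ with $2\sigma+2 < q < 2^*$ (interpreting $2^*=\infty$ when $d=2$) and choosing $\theta \in (0,1)$ with $\frac{1}{2\sigma+2} = \frac{\theta}{2} + \frac{1-\theta}{q}$, Hölder's inequality yields
\begin{align*}
\|u_n\|_{L^{2\sigma+2}(|x|>R)} \le \|u_n\|_{L^2(|x|>R)}^{\theta}\,\|u_n\|_{L^{q}(|x|>R)}^{1-\theta}.
\end{align*}
The factor $\|u_n\|_{L^q}$ stays bounded via the Sobolev embedding $H^1 \hookrightarrow L^q$ and the uniform $H^1$-bound, while $\|u_n\|_{L^2(|x|>R)}$ tends to zero uniformly by the previous display; hence $\int_{|x|>R}|u_n|^{2\sigma+2}\,dx$ is uniformly small.

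Finally, I would combine the two ingredients in the standard $\eps/3$ fashion: given $\eps>0$, fix $R$ so large that the tails of both $u_n$ (uniformly in $n$) and of $u$ are below $\eps$, then invoke the strong convergence on $B_R$ to absorb the remaining term for $n$ large. This gives $u_n \to u$ strongly in $L^{2\sigma+2}(\R^d)$ and establishes compactness. The main obstacle is precisely the uniform tail estimate: it is here that the confining hypothesis $V \to +\infty$ is indispensable. Without it the $L^2$-mass could escape to spatial infinity and the embedding would fail to be compact, which is exactly the phenomenon that distinguishes the present setting from the problem posed under the single mass constraint on all of $\R^d$.
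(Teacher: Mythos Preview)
The paper does not supply its own proof of this lemma; it is quoted directly from \cite[Lemma~3.1]{Zh}. Your argument is correct and is essentially the standard proof of that result: local compactness from Rellich--Kondrachov combined with the uniform tail decay $\int_{|x|>R}|u_n|^2\,dx \le C/m_R$ forced by the confinement $V(x)\to+\infty$, upgraded to $L^{2\sigma+2}$ by interpolation against a Sobolev-controlled $L^q$ norm. One cosmetic point: the parenthetical ``hence also in $\mathcal{H}^1$'' is not quite how the logic runs (weak convergence in $H^1$ does not by itself give weak convergence in the stronger space), but this is harmless here since you never use the $\mathcal{H}^1$ weak limit---only the $H^1$ weak limit for Rellich--Kondrachov and the uniform bound on $\int V|u_n|^2$ for the tail estimate.
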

	
	Next we shall give the local well-posedness of solutions to \eqref{equs} in $\mathcal{H}^1$, which can be derived by applying Strichartz estimates established in \cite{YZ}, see for example \cite[Theorem 3.4]{Ca}.
	
	\begin{lem} \label{lwp}
		Let $V$ satisfy Assumption \ref{a1},  $0<\sigma<\frac{k+2}{k(d-2)_+}$ and $u_0 \in \mathcal{H}^1$. Then there exist $T_{max}>0$ and a unique solution $u \in C((-T_{max}, T_{max}), \mathcal{H}^1)$ to \eqref{equs} conserving the mass and the total energy, that is
		$$
		M(u(t))=M(u_0), \quad E(u(t))=E(u_0), \quad t \in (-T_{max}, T_{max}),
		$$
		where
		$$
		M(u):=\int_{\R^d} |u|^2 \,dx, 
		$$
		$$
		E(u):=\int_{\R^d} \frac 12 |\nabla u|^2 + V(x) |u|^2 - \frac{\lambda}{\sigma+1} |u|^{2 \sigma +2} \,dx.
		$$
		Moreover, there holds the blow-up alternative, i.e. $T_{max}=+\infty$ or $\lim_{t \to T_{max}^-}\|\nabla u(t, \cdot)\|_2=+\infty$.
	\end{lem}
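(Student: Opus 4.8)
The plan is to recast \eqref{equs} in its Duhamel (mild) form and solve it by a contraction-mapping argument; the only ingredient that goes beyond the constant-coefficient theory is a suitable family of Strichartz estimates for the Schr\"odinger group $e^{-\mathrm{i}tH}$ generated by the confined operator $H=-\frac12\Delta+V$. Writing a mild solution as
\begin{equation*}
u(t)=e^{-\mathrm{i}tH}u_0+\mathrm{i}\lambda\int_0^t e^{-\mathrm{i}(t-s)H}\bigl(|u(s)|^{2\sigma}u(s)\bigr)\,ds =: \Phi(u)(t),
\end{equation*}
I would look for a fixed point of $\Phi$ in a complete metric space of the form $X_T:=C([-T,T],\mathcal{H}^1)\cap L^q([-T,T],L^r)$, where $(q,r)$ is an admissible pair for the Strichartz estimates and $\mathcal{H}^1$ is identified with the form domain of $H$ (so that the weighted part $\|V^{1/2}u\|_2$ of the energy norm is propagated together with $\|\nabla u\|_2$).

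The key input, which I would import from \cite{YZ} (cf. \cite[Theorem 3.4]{Ca}), is that for the super-quadratic potential of Assumption \ref{a1} the group $e^{-\mathrm{i}tH}$ satisfies Strichartz estimates on every bounded time interval. Because $V$ grows faster than quadratically, these estimates are only local in time and come with a loss of regularity measured by the growth exponent $k$; it is precisely this loss that accounts for the hypothesis $0<\sigma<\frac{k+2}{k(d-2)_+}$, which for the limiting quadratic case $k=2$ degenerates to the energy-subcritical threshold $\frac{2}{(d-2)_+}$ appearing in \eqref{gn} and is strictly smaller when $k>2$. With these estimates in hand, the remaining analysis mirrors the standard $H^1$ theory.

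Next I would establish the nonlinear estimate that closes the contraction. Using the pointwise bound $\bigl||u|^{2\sigma}u-|v|^{2\sigma}v\bigr|\lesssim(|u|^{2\sigma}+|v|^{2\sigma})|u-v|$ together with the embedding $\mathcal{H}^1\hookrightarrow H^1(\R^d)$ and the Gagliardo--Nirenberg inequality \eqref{gn}, the range of $\sigma$ permitted by the Strichartz admissibility guarantees that $\Phi$ maps a small ball of $X_T$ into itself and is a strict contraction once $T=T(\|u_0\|_{\mathcal{H}^1})$ is small enough. Banach's fixed point theorem then produces a unique local solution, which is extended to a maximal interval $(-T_{max},T_{max})$ by iteration. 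Conservation of mass and energy would be proved first on a regularized problem (smoothing the data and the nonlinearity), where one may legitimately differentiate $M(u(t))$ and $E(u(t))$ and use the equation to get $\frac{d}{dt}M=\frac{d}{dt}E=0$, and then recovered in the limit via the continuous dependence on the data. For the blow-up alternative, since the local existence time depends only on $\|u_0\|_{\mathcal{H}^1}$, a finite $T_{max}$ forces $\|u(t)\|_{\mathcal{H}^1}\to\infty$; mass conservation controls $\|u(t)\|_2$, and if in addition $\|\nabla u(t)\|_2$ stayed bounded, then \eqref{gn} would bound $\|u(t)\|_{2\sigma+2}^{2\sigma+2}$ and hence, by energy conservation, $\|V^{1/2}u(t)\|_2$ as well, contradicting the blow-up of the full norm; therefore $\|\nabla u(t)\|_2\to+\infty$ as $t\to T_{max}^-$.

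The main obstacle is the Strichartz theory for the super-quadratic confinement: unlike the free or sub-quadratic cases, the dispersive estimates for $e^{-\mathrm{i}tH}$ degenerate, holding only on finite time intervals and with a genuine loss of derivatives, so the delicate point is to invoke the correct admissibility and loss from \cite{YZ} and to check that the induced nonlinear estimate still closes within the stated window for $\sigma$. Once this dispersive input is secured, the fixed point, the conservation laws, and the blow-up alternative follow the now-classical pattern of \cite{Ca}.
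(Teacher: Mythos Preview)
Your proposal is correct and aligns with the paper's treatment: the paper does not actually prove this lemma but states it as a known result, pointing to the Strichartz estimates of \cite{YZ} and to \cite[Theorem 3.4]{Ca} for the full argument. Your sketch---Duhamel formulation, contraction mapping in a Strichartz-controlled space using the local-in-time dispersive estimates for super-quadratic $V$, conservation laws via regularization, and the blow-up alternative from the norm-dependence of the local time together with energy conservation---is precisely the standard route encoded in those references, including the identification of the restriction $\sigma<\tfrac{k+2}{k(d-2)_+}$ as stemming from the derivative loss in the super-quadratic Strichartz theory.
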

	
	\begin{rem}
		It is unknown if the local well-posedness of solutions to \eqref{equs} remains valid for any $0<\sigma<2$ for $d=3$. See \cite{Ca} and the references therein for more details. 
	\end{rem}
	
	Indeed, apart from the mass and the total energy, there exists another important physical quantity, which is the so-called mean angular momentum of $u$ around a given rotation axis in $\R^d$. To further clarify, we shall denote the coordinate of a point in $\R^3$ by $(x_1, x_2, z)$ and assume, without loss of generality, that the rotation axis is the $z$-axis. Then the mean angular momentum of $u$ is given by
	$$
	L(u):=\langle u, L_zu \rangle_{2},
	$$
	where $L_z$ is the third component of the quantum mechanical angular momentum operator $\mathbb{L}:=-\textnormal{i} x \wedge \nabla$, namely
	$$
	L_zu=-\textnormal{i} \left(x_1 \partial_{x_2}u-x_2 \partial_{x_1} u\right).
	$$
	In $\R^2$, we shall make use of the standard convention and denote the coordinate of a point in $\R^2$ by $x=(x_1, x_2)$. The operators $\mathbb{L}$ and $L_z$ remain as above.
	
	Let $u \in C((-T_{max}, T_{max}), \mathcal{H}^1)$ be the solution to \eqref{equs} with initial datum $u_0 \in \mathcal{H}^1$. By straightforward calculations, one can show that the time-evolution of $L(u)$ under the flow of \eqref{equs} enjoys (see \cite{AMS}) that
	$$
	L(u(t)) + \textnormal{i} \int_0^t \int_{\R^d} |u(\tau)|^2 L_zV \,dxd\tau=L(u_0).
	$$ 
	If $V$ is radially symmetric, then it holds that $L_z V \equiv 0$. In this case, the dynamics of \eqref{equs} also satisfies the angular momentum conservation law, that is
	$$
	L(u(t))=L(u_0), \quad t \in (-T_{max}, T_{max}).
	$$
	
	Let $\Omega \in \R$ be a given angular velocity and $\Omega L_z$ be the generator of time-dependent rotations around the $z$-axis, namely for any $f \in L^2(\R^d)$,
	$$
	e^{\textnormal{i} t \Omega L_z} f(x)=f(e^{-t \Theta} x),
	$$ 
	where $\Theta$ is the skew symmetric matrix given by
	$$
	\Theta:=
	\left(
	\begin{matrix}
		0 &\Omega \\
		-\Omega & 0
	\end{matrix}
	\right) \,\, \mbox{for} \,\, d=2, \quad
	\left(
	\begin{matrix}
		0 &\Omega & 0 \\
		-\Omega & 0 & 0 \\
		0 & 0 & 0
	\end{matrix}
	\right) \,\, \mbox{for} \,\, d=3.
	$$
	It is easy to see that $\left(e^{-\textnormal{i} t \Omega L_z}\right)_{t \in \R}$ is a family of unitary operators, 
	$$
	e^{-\textnormal{i} t \Omega L_z} : \mathcal{H}^1 \to \mathcal{H}^1.
	$$ 
	Let $u \in C((-T_{max}, T_{max}), \mathcal{H}^1)$ be the solution to \eqref{equs} with initial datum $u_0 \in \mathcal{H}^1$. Define
	$$
	v(t,x):=e^{\textnormal{i} t \Omega L_z} u(t,x)=u(t, e^{-t \Theta} x).
	$$
	This implies that $u(t,x)=v(t, e^{t \Theta} x)$. It is simple to compute (see \cite{AMS}) that
	$$
	\partial_t u(t, x)=\partial_t v(t, e^{t \Theta} x)+ \left(\Theta \cdot e^{t \Theta} x \right) \cdot \nabla v(t, e^{t \Theta} x), \quad \Delta u(t,x)=\Delta v(t, e^{t \Theta} x).
	$$
	Observe that if $V$ is radially symmetric, then $V(e^{-t \Theta} x)=V(x)$. Consequently, we find that $v$ satisfies the following nonlinear Schr\"odinger equation,
	\begin{align} \label{equt}
		\left\{
		\begin{aligned}
			&\textnormal{i} \partial_t v=H v-\Omega L_z v-\lambda |v|^{2\sigma} v, \\
			& v(0)=u_0.
		\end{aligned}
		\right.
	\end{align}
	In particular, it holds that $L(v(t))=L(u(t))=L(u_0)$ for any $t \in (-T_{max}, T_{max})$. 
	
	Equation \eqref{equt} is a nonlinear Schr\"odinger equation with angular momentum rotation, which arises in the mean-field description of rotating Bose-Einstein condensates, see for example \cite{AMS, ANS, Se}. It is obvious that time-periodic solution of the form
	$$ 
	v(t,x)=e^{-\textnormal{i} \omega t} \phi(x)
	$$
	solves the associated stationary equation with rotation,
	\begin{align} \label{equ}
		H \phi -\omega \phi -\Omega L_z \phi=\lambda |\phi|^{2 \sigma} \phi.
	\end{align}
	This is usually regarded as the Euler-Lagrange equation of the associated Gross-Pitaevskii energy functional $E_{\Omega}$ restricted on $S(m)$, where
	$$
	E_{\Omega}(u):=\int_{\R^d} \frac 12 |\nabla u|^2 + V(x) |u|^2 - \frac{\lambda}{\sigma+1} |u|^{2 \sigma +2} \,dx-\Omega L(u),
	$$
	\begin{align} \label{c0}
		S(m):=\left\{u \in  \mathcal{H}^1 :  M(u)=m>0 \right\}.
	\end{align}
	{If there exists a critical point $u \in S(m)$ of the energy functional $E_{\Omega}$ restricted on $S(m)$, then it is a solution to \eqref{equ} and \eqref{c0}, where $\omega \in \R$ is Lagrange multiplier associated to the mass constraint.}
	Observe that, by Assumption \ref{a1}, 
	\begin{align} \label{lu}
		\begin{split}
			L(u) \leq \| |x| u\|_2 \|\nabla u\|_2 \leq C_{\eps}\| |x| u\|_2^2 + \eps \|\nabla u\|^2_2 
			&=C_{\eps} \left(\| |x|u\|_{L^2(|x| \leq \widetilde{R} )}^2 + \| |x|u\|_{L^2(|x| > \widetilde{R} )}^2 \right) + \eps \|\nabla u\|_2^2 \\
			& \leq C_{\eps}\widetilde{R}^2 \|u\|_2^2 + \frac 14 \|V^{1/2} u\|_2^2 +  \eps \|\nabla u\|_2^2,
		\end{split}
	\end{align}
	where $\eps>0$, $C_{\eps}>0$ and $\widetilde{R} \geq R$ is a large constant such that $2C_{\eps}^{1/2}|x| \leq V(x)$ for $|x| > \widetilde{R}$.
	Thereby, we find that $E_{\Omega}$ is well-defined in $\mathcal{H}^1$ and it is of class $C^1$ in $\mathcal{H}^1$. When $0<\sigma<\frac 2 d$, by Gagliardo-Nirenberg’s inequality \eqref{gn} jointly with \eqref{lu}, one then knows that the functional $E_{\Omega}$ restricted on $S(m)$ is bounded from below. Therefore, the existence of solutions to \eqref{equ}-\eqref{c0} can be proved by minimizing the functional $E_{\Omega}$ subject to $S(m)$, i.e. considering the following global minimization problem,
	\begin{align} \label{gmin0}
		e_{\Omega}(m):=\inf_{u \in S(m)} E_{\Omega}(u).
	\end{align}
	The existence and qualitative properties of minimizers to \eqref{gmin0} have been revealed in \cite{Se, Se1}. We also refer the readers to {\cite{ANS, BC, BCPY, CL, CRY, GLY, LY}} to the pertain consideration.
	
	As recently pointed out in \cite{NSS} a drawback of the aforementioned study with only a single mass constraint is that one does not know how much vorticity any solution to \eqref{equ}-\eqref{c0} carries and that one cannot exclude the possibility that solutions to \eqref{equ}-\eqref{c0} are radially symmetric. In particular, the numerical value $l \in \R$ of the mean angular momentum
	$$
	L(u)=\langle u, L_z u \rangle_2 
	$$
	is unknown. However, for $\lambda>0$ sufficiently large, it turns out in \cite{Se, Se1} that symmetry breaking of minimizers to \eqref{gmin0} can occur, see also \cite{BC1, BDZ} for numerical simulations. In \cite{NSS}, the authors initially posed whether it is possible to guarantee the existence of stationary profile $u$ to \eqref{equ}, which exhibits a certain predetermined angular momentum $L(u)=l>0$, for any given mass $m>0$. This yields a different point of view to explore solutions to rotating Bose-Einstein condensates within the framework of Gross-Pitaevskii theory. As a consequence, it would be interesting to investigate solutions to \eqref{equ} under the double constraints,
	\begin{align} \label{c}
		\int_{\R^d} |u|^2 \,dx=m, \quad \int_{\R^d} L_z u \overline{u}\,dx=l,
	\end{align}  
	where $m,l>0$ are a priori given. In this situation, solutions to \eqref{equ} and \eqref{c} correspond to critical points of the underlying energy functional $E$ restricted on $S(m,l)$, where
	$$
	S(m,l):=\left\{ u\in \mathcal{H}^1 : G_1(u)=0, G_2(u)=0\right\},
	$$
	$$
	G_1(u):=\int_{\R^d} |u|^2 \,dx-m, \quad G_2(u):=\int_{\R^d} L_z u \overline{u}\,dx-l.
	$$
	It turns out in \cite[Lemma 2.2]{NSS} that $S(m,l)$ is non-empty. It is clear that solutions to \eqref{equ} and \eqref{c} are not radially symmetric, because of $l>0$. Here $\omega, \Omega \in \R$ are unknown, which appear as Lagrange multipliers associated to the double constraints.

	When $0<\sigma< \frac 2 d$, by Gagliardo-Nirenberg’s inequality \eqref{gn}, one finds that that the energy functional $E$ restricted on $S(m,l)$ is bounded from below. In this case, one can introduce the following global minimization problem to study solutions to \eqref{equ} and \eqref{c},
	\begin{align} \label{gmin1}
		e(m,l):=\inf_{u \in S(m,l)} E(u).
	\end{align}
	With the help of \eqref{gmin1}, the existence and orbital stability of solutions to \eqref{equ} and \eqref{c} have been recently established in \cite{NSS}, where solutions actually correspond to such global minimizers to \eqref{gmin1}. As a complement of the study conducted in \cite{NSS}, we shall in Section \ref{regularity} derive conditions which guarantee that minimizers to \eqref{gmin1} are regular, see Proposition \ref{thm4}. While $ \sigma>\frac 2 d$, then $E$ restricted on $S(m,l)$ is unbounded from below. To see this, we define $u_{s}=s^{{d}/{2}} u(s\cdot)$ for $s>0$. Then $\|u_s\|_2=\|u\|_2$, $\langle L_zu_s, u_s \rangle_2= \langle L_z u, u \rangle_2$ and 
	\begin{align*}
		E(u_s)=\int_{\R^d} \frac {s^2}{2} |\nabla u|^2 + V\left(\frac{x}{s}\right) |u|^2 - \frac{\lambda s^{\sigma d}}{\sigma+1} |u|^{2 \sigma +2} \,dx.
	\end{align*}
	Since $c_0(1+|x|^2)^{\frac k2} \leq V(x) \leq C_0(1+|x|^2)^{\frac k2}$ for $|x|>R$ by Assumption \ref{a1}, then $E(u_s) \to -\infty$ as $s \to \infty$. Therefore, it is impossible to rely on the global minimization problem \eqref{gmin1} to investigate solutions to \eqref{equ} and \eqref{c}. 
	{Here solutions to \eqref{equ} and \eqref{c} indeed correspond to local minimizers or mountain pass type critical points of $E$ restricted on $S(m,l)$.
	}
	More precisely, when $\sigma>\frac{2}{d}$, as an extension of the results derived in \cite{NSS}, we have the following result.
	
	\begin{thm} \label{existence}
		Let $V$ satisfy Assumptions \ref{a1} and $\frac 2d<\sigma<\frac{2}{(d-2)_+}$. Then there exists $m_0>0$ such that, for any $0<m<m_0$, \eqref{equ} and \eqref{c} has two solutions $u_{m,l},v_{m,l} \in S(m,l)$ with $0<{E(u_{m,l})<E(v_{m,l})}$, where ${u_{m,l}}$ is a local minimizer of $E$ and ${v_{m,l}}$ is a mountain pass type critical point of $E$ restricted on $S(m,l)$.
	\end{thm}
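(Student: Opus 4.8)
The plan is to exploit a \emph{potential-well} structure on $S(m,l)$ that appears for small mass $m$ in the mass-supercritical regime, producing a local minimizer in the well and a mountain-pass critical point over the barrier. Since $E$ restricted to $S(m,l)$ is unbounded from below (by the scaling $u_s=s^{d/2}u(s\cdot)$ displayed above, which preserves both constraints in \eqref{c}), global minimization is unavailable and one must instead localize. The key tool throughout is the compact embedding $\mathcal{H}^1 \hookrightarrow L^{2\sigma+2}$ of Lemma \ref{cembedding} (and, with $\sigma=0$, $\mathcal{H}^1\hookrightarrow L^2$), together with the control of $\||x|u\|_2$ by $\|V^{1/2}u\|_2$ coming from the super-quadratic growth in Assumption \ref{a1}; this is what replaces concentration-compactness and rules out any loss of mass or angular momentum at infinity.

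First I would set up the geometry. Combining the Gagliardo--Nirenberg inequality \eqref{gn} with $V\ge 0$ gives $E(u)\ge \tfrac12\|\nabla u\|_2^2 - C m^{\gamma}\|\nabla u\|_2^{d\sigma}$ with $d\sigma>2$ and $\gamma=\tfrac{\sigma(2-d)+2}{2}$; the right-hand side, viewed as a function of $\|\nabla u\|_2$, vanishes at the origin, is positive for small argument and tends to $-\infty$, hence has a strictly positive interior maximum whose height grows as $m\to 0$. For $m$ small this yields a radius $\rho=\rho(m)>0$ such that $E$ is bounded below by a positive constant on the shell $\{\|\nabla u\|_2^2=\rho\}$, while (by comparison with a suitable configuration of small gradient) the infimum of $E$ over the ball $\Gamma_\rho:=\{u\in S(m,l):\|\nabla u\|_2^2\le\rho\}$ is strictly smaller; thus this infimum cannot be reached on $\partial\Gamma_\rho$. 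A minimizing sequence for $E$ over $\Gamma_\rho$ is bounded in $\mathcal{H}^1$ (the confinement makes $\tfrac12\|\nabla u\|_2^2+\int V|u|^2$ coercive on $\Gamma_\rho$); passing to a subsequence it converges weakly in $\mathcal{H}^1$ and strongly in $L^{2\sigma+2}$ and $L^2$ by Lemma \ref{cembedding}, and the weighted control from Assumption \ref{a1} lets the angular-momentum constraint $G_2$ pass to the limit, so the limit $u_{m,l}$ lies in $\Gamma_\rho$ and realizes the infimum. Strict interiority $\|\nabla u_{m,l}\|_2^2<\rho$ makes it a local minimizer of $E$ on $S(m,l)$; the Lagrange multiplier rule (after checking that the differentials of $G_1$ and $G_2$ are independent, i.e. $u_{m,l}$ is not an eigenfunction of $L_z$) then produces $\omega,\Omega\in\R$ and shows $u_{m,l}$ solves \eqref{equ}. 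The lower bound above also gives $E(u_{m,l})>0$.

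Next I would build the mountain pass. Beyond the barrier there are configurations of strictly lower energy (for instance $u_s$ with $s$ large, where $E(u_s)\to-\infty$), so fixing such a $u_1\in S(m,l)$ with $\|\nabla u_1\|_2^2>\rho$ and $E(u_1)<\inf_{\Gamma_\rho}E$, the min-max level
\[
\gamma_{m,l}:=\inf_{h\in\mathcal P}\ \max_{t\in[0,1]} E(h(t)),\qquad
\mathcal P:=\{h\in C([0,1],S(m,l)):h(0)=u_{m,l},\ h(1)=u_1\},
\]
satisfies $\gamma_{m,l}\ge \inf_{\{\|\nabla u\|_2^2=\rho\}}E>\max\{E(u_{m,l}),E(u_1)\}$, because every admissible path must cross the shell. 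This is the genuine mountain-pass geometry that will yield $E(v_{m,l})=\gamma_{m,l}>E(u_{m,l})$.

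The main obstacle is the compactness of a Palais--Smale sequence at the level $\gamma_{m,l}$, where the supercritical exponent makes $\|\nabla u_n\|_2$ a priori unbounded (this is precisely the blow-up threat). To overcome this I would run the minimax not for $E$ alone but for the scaling-augmented functional on $S(m,l)\times\R$ built from the fibering $u_s$, so that the resulting Palais--Smale sequence additionally makes the associated Pohozaev/virial quantity tend to zero. Exploiting \eqref{v} (so that $\int\langle\nabla V(x),x\rangle|u_n|^2\ge 0$ enters with a favourable sign), the combination $E(u_n)-\tfrac{1}{d\sigma}P(u_n)$ controls $(\tfrac12-\tfrac1{d\sigma})\|\nabla u_n\|_2^2+\int V|u_n|^2$, which bounds $\|u_n\|$ since $d\sigma>2$; after this the compact embedding of Lemma \ref{cembedding} upgrades weak to strong convergence in the nonlinear term, the two constraints pass to the limit as before, and the Lagrange multipliers $\omega_n,\Omega_n$ are shown to be bounded and convergent. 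The weak limit is then a nontrivial critical point $v_{m,l}\in S(m,l)$ solving \eqref{equ}--\eqref{c} at energy $\gamma_{m,l}$, completing the proof. Verifying that the constraints remain nondegenerate along the limit, and that the recovered multipliers are the correct ones, is the delicate bookkeeping underlying this last step.
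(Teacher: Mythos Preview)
Your overall strategy coincides with the paper's: establish a potential-well geometry on $S(m,l)$ for small $m$, extract a local minimizer by compactness (Lemma~\ref{cembedding}), then run a mountain-pass argument using Jeanjean's auxiliary fibering functional $F(u,s)=E(s\ast u)$ to produce a Palais--Smale sequence with $P(v_n)=o_n(1)$, and use the combination $E-\tfrac{1}{\sigma d}P$ together with \eqref{v} and $\sigma d>2$ to bound $\|v_n\|$. The treatment of the Lagrange multipliers and the passage of the angular-momentum constraint to the limit also parallel the paper (which defers the constraint-regularity issue to Corollary~\ref{thm5}).

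There is, however, a genuine gap in your potential-well step. You localize via the gradient alone, setting $\Gamma_\rho=\{u\in S(m,l):\|\nabla u\|_2^2\le\rho\}$, whereas the paper localizes with the full norm $\|u\|=\|\nabla u\|_2+\|V^{1/2}u\|_2$ (Lemma~\ref{mps0}). The difference matters precisely at the point you gloss over: to conclude that $\inf_{\Gamma_\rho}E$ lies strictly below the barrier value on $\partial\Gamma_\rho$, you need a test configuration inside $\Gamma_\rho$ with controlled energy. But ``small gradient'' does not bound $E(u)$ from above on $S(m,l)$, since $\int V|u|^2$ is completely uncontrolled; indeed, the very scaling $u_s=s^{d/2}u(s\cdot)$ you invoke sends $\|\nabla u_s\|_2\to 0$ while $E(u_s)\to+\infty$ as $s\to 0$, by the super-quadratic growth in Assumption~\ref{a1}. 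So your phrase ``by comparison with a suitable configuration of small gradient'' is not justified. The paper's choice of norm gives the clean upper bound $E(u)\le\|\nabla u\|_2^2+\|V^{1/2}u\|_2^2\le\|u\|^2$, from which \eqref{below2} is immediate; combined with the lower bound \eqref{below} (which is also expressed in terms of $\|u\|$), this yields the strict separation $\inf_{B(r/16)}E<\inf_{B(r)\setminus B(r/2)}E$. Replacing $\Gamma_\rho$ by a sublevel of $\|u\|$ repairs your argument without altering anything downstream.
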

	
	{
	\begin{rem}
	In Theorem \ref{existence}, the smallness of $m$ is to guarantee the existence of mountain pass structure for the energy functional $E$ restricted on $S(m, l)$, see Lemma \ref{mps0}, which is essential to establish the existence of solutions. It is unknown whether Theorem \ref{existence} remains valid for $m \geq m_0$. In view of \eqref{below}, one finds that $m_0$ depends explicitly on $r$, $\sigma$ and $d$.
	\end{rem}
	}
	
	{Define $B(r):=\left\{ u \in S(m, l) : \|u\|^2 \leq r \right\}$ and $\|u\|:=\|\nabla u\|_2 + \|V^{1/2} u\|_2$.} To prove Theorem \ref{existence}, we first analyze the structure of $E$ restricted on $B(r)$, see Lemma \ref{mps0}, by which we are able to introduce the following local minimization problem,
	\begin{align} \label{lmin1}
		e(m,l)=\inf_{u \in B(r)} E(u).
	\end{align}
	Next, utilizing Lemma \ref{cembedding} and the elements proposed in the proof of \cite[Proposition 2.4]{NSS}, we are able to show the compactness of any minimizing sequence to \eqref{lmin1}. Here we need that the potential $V$ grows super-quadratically at infinity. This then leads to the existence of solutions to \eqref{equ} and \eqref{c} in the spirit of Lemma \ref{mps0}, Corollary \ref{thm5} and Lagrange’s theorem. It is worth mentioning that the verification of the compactness of the sequence becomes more involved, because of the presence of the angular momentum constraint.
	
	To seek for the second solution to \eqref{equ} and \eqref{c}, we first establish the mountain pass structure of $E$ restricted on $S(m,l)$, see Lemma \ref{mps}. Note that it is hard to show that any Palais-Smale sequence for $E$ restricted on $S(m,l)$ at the mountain pass level $\gamma(m,l)$ is bounded in $\mathcal{H}^1$. Next, the essential argument lies in finding a Palais-Smale sequence for $E$ restricted on $S(m,l)$ at the mountain pass level $\gamma(m,l)$ around the related Pohozaev manifold $\mathcal{P}(m,l)$, see Lemma \ref{ps}, where
	$$
	\mathcal{P}(m,l):=\left\{ u \in S(m, l) : P(u)=0\right\}.
	$$ 
	Here $P(u)=0$ is the so-called Pohozaev identity to \eqref{equ} subject to $S(m,l)$. This is mainly inspired by the approach in \cite{Jeanjean}. Finally, by applying Lagrange’s theorem and verifying the compactness of the Palais-Smale sequence in $\mathcal{H}^1$, we arrive at the desired conclusion.  
	
	As a by-product, we also derive orbital stability of the set of minimizers to \eqref{lmin1}. 
	The result can be stated as follows.
	
	\begin{thm} \label{stable}
		Let $V$ satisfy Assumptions \ref{a1}, $\frac 2d<\sigma<\frac{k+2}{k(d-2)_+}$ and $\mathcal{B}(m,l)$ be the set of local minimizers obtained in Theorem \ref{existence}. Then $\mathcal{B}(m,l)$ is orbitally stable, this is, for any $\eps>0$, there exists $\delta>0$ such that if 
		$$
		\inf_{u \in \mathcal{B}(m,l)} \|u-u_0\|_{\mathcal{H}^1}<\delta,
		$$
		then the solution $v \in C(\R, \mathcal{H}^1)$ to \eqref{equt} with initial datum $u_0 \in \mathcal{H}^1$ satisfies
		$$
		\sup_{t \in \R} \inf_{u \in \mathcal{B}(m,l)} \|u-v(t, \cdot)\|_{\mathcal{H}^1}<\eps.
		$$
	\end{thm}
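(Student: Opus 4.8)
The plan is to argue by contradiction with a Cazenave--Lions type scheme adapted to the double constraint, built on the conserved quantities of the rotating flow and the energy well furnished by Lemma \ref{mps0}. The first preliminary step is to record which quantities are conserved along \eqref{equt}: the mass $M$ and the angular momentum $L$ are conserved directly, while $E_{\Omega}=E-\Omega L$ is precisely the Hamiltonian of \eqref{equt}; hence conservation of $E_{\Omega}$ together with that of $L$ forces the total energy $E$ itself to be conserved. Consequently any solution $v(t)$ to \eqref{equt} stays on the constraint set determined by its initial mass and angular momentum, with frozen energy $E(v(t))=E(u_0)$. The second preliminary step is a trapping argument: using the local-minimum structure of Lemma \ref{mps0}, namely that on $S(m,l)$ the level $e(m,l)=\inf_{B(r)}E$ lies strictly below the energy on the collar $\{u\in S(m,l):\|u\|^2\ \text{near}\ r\}$, and using continuity of this barrier in the constraint values, I would show that if $u_0$ is sufficiently close to $\mathcal{B}(m,l)$ then $v(t)$ remains trapped inside $B(r)$ for all times of existence. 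Since trapping bounds $\|\nabla v(t)\|_2$, the blow-up alternative in Lemma \ref{lwp} (valid for $\sigma<\frac{k+2}{k(d-2)_+}$) upgrades the local solution to a global one $v\in C(\R,\mathcal{H}^1)$, as required by the statement.

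Now suppose $\mathcal{B}(m,l)$ is not orbitally stable. Then there exist $\eps_0>0$, initial data $u_{0,n}\in\mathcal{H}^1$ with $\inf_{w\in\mathcal{B}(m,l)}\|u_{0,n}-w\|_{\mathcal{H}^1}\to0$, and times $t_n$ such that $\inf_{w\in\mathcal{B}(m,l)}\|v_n(t_n,\cdot)-w\|_{\mathcal{H}^1}\ge\eps_0$, where $v_n$ solves \eqref{equt} with datum $u_{0,n}$. Because $M$, $L$ and $E$ are continuous on $\mathcal{H}^1$ and because $\mathcal{B}(m,l)\subset S(m,l)$ with $E\equiv e(m,l)$ on it, I obtain $M(u_{0,n})\to m$, $L(u_{0,n})\to l$ and $E(u_{0,n})\to e(m,l)$. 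By the conservation laws the same limits hold for the sequence $v_n(t_n)$, and by the trapping of the previous paragraph $\|v_n(t_n)\|^2\le r$ for $n$ large. Thus $\{v_n(t_n)\}$ is an almost-minimizing sequence for \eqref{lmin1}, whose constraints $m$ and $l$ are achieved only in the limit.

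The conclusion then follows from the compactness machinery already used in the proof of Theorem \ref{existence}. The sequence is bounded in $\mathcal{H}^1$, so after passing to a subsequence $v_n(t_n)\rightharpoonup w$ weakly in $\mathcal{H}^1$. Lemma \ref{cembedding} with $\sigma=0$ gives the compact embedding $\mathcal{H}^1\hookrightarrow L^2(\R^d)$, whence $M(w)=m$; the super-quadratic growth of $V$ yields compactness of $u\mapsto |x|u$ in $L^2$, which combined with the weak convergence of the gradients gives $L(w)=l$; and the strong $L^{2\sigma+2}$ convergence together with weak lower semicontinuity yields $E(w)\le\liminf E(v_n(t_n))=e(m,l)$. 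Since $\|w\|^2\le\liminf\|v_n(t_n)\|^2\le r$, we have $w\in B(r)\cap S(m,l)$, so minimality forces $E(w)=e(m,l)$ and $w\in\mathcal{B}(m,l)$. Finally, from $E(v_n(t_n))\to E(w)$, the strong $L^{2\sigma+2}$ convergence, and the separate lower semicontinuity of $\|\nabla\cdot\|_2^2$ and $\|V^{1/2}\cdot\|_2^2$, each of these two terms must converge, upgrading weak to strong convergence $v_n(t_n)\to w$ in $\mathcal{H}^1$. This contradicts $\inf_{w\in\mathcal{B}(m,l)}\|v_n(t_n,\cdot)-w\|_{\mathcal{H}^1}\ge\eps_0$, closing the argument.

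I expect the main obstacle to be the trapping step under the \emph{double} constraint. Because a single scalar rescaling multiplies $M$ and $L$ by the same factor, it cannot independently correct both the mass and the angular momentum; one therefore cannot cleanly project $v_n(t_n)$ back onto $S(m,l)$, and the compactness argument must instead recover the constraints only in the limit, as above. Showing that the energy barrier of Lemma \ref{mps0} persists, uniformly enough to trap the flow, for constraint values $(M(u_0),L(u_0))$ near $(m,l)$ rather than exactly at $(m,l)$ is the delicate point. A secondary technical point is the weak continuity of $L(u)=\langle u,L_zu\rangle_2$, which fails on $H^1(\R^d)$ in general but holds here thanks to the $L^2$-compactness of $u\mapsto|x|u$ provided by the super-quadratic confinement.
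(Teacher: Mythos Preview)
Your proposal is correct and follows essentially the same contradiction/trapping/compactness scheme as the paper's own proof, which likewise invokes Lemma~\ref{mps0} for the barrier, the conservation laws of Lemma~\ref{lwp} together with conservation of $L$, and the compactness argument from Theorem~\ref{thm1}. Your worry about the trapping step when $(M(u_0),L(u_0))$ is merely close to $(m,l)$ is easily settled: the coercivity bound \eqref{below} underlying Lemma~\ref{mps0} depends only on $M(u)$, not on $L(u)$, so the energy barrier on the annulus $\{r/2\le\|u\|^2\le r\}$ persists uniformly for masses in a neighbourhood of $m$, which is all that is needed.
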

	
	In the mass supercritical case, one finds that the conservation laws, Gagliardo-Nirenberg’s inequality \eqref{gn} and \eqref{lu} cannot lead to the global existence of solutions to \eqref{equt}. This is different from the mass subcritical case. Here Lemma \ref{mps0} plays an important role to guarantee the global existence of solutions to \eqref{equt} with initial data near the set of minimizers to \eqref{lmin1}
	
	
	We now turn to investigate orbital instability of the mountain pass type solutions to \eqref{equ} and \eqref{c} obtained in Theorem \ref{existence}. For this, we shall addtionally assume that 
	\begin{align} \label{v1}
		\sum_{i, j=1}^d\partial_{ij}V(x)x_ix_j > 0, \quad x \in \R^{d} \backslash \{0\}. 
	\end{align}
	Moreover, we shall assume that, for any $x \in \R^d$, the function
	\begin{align} \label{v2}
		s \mapsto 2\langle \nabla V\left(\frac{x}{s}\right), x \rangle +\frac{1}{s} \sum_{i, j=1}^d\partial_{ij}V\left(\frac{x}{s}\right)x_ix_j 
	\end{align}
	is nonincreasing with respect to $s$ on $(0, +\infty)$. {A typical example for an admissible potential satisfying \eqref{v1} and \eqref{v2} is given by $V(x)=|x|^k$ for $k>2$}. Here the assumptions \eqref{v1} and \eqref{v2} are used principally to establish an alternative variational characterization of the mountain pass energy level $\gamma(m,l)$, see Lemma \ref{vc}. This along with the evolution of the virial quantity $\mathcal{M}[u(t)]$ (see Lemma \ref{virial}) then leads to the following result, where
	$$
	\mathcal{M}[u]:=\int_{\R^d} |x|^2 |u|^2 \,dx.
	$$
	
	\begin{thm} \label{unstable}
		Let $V$ satisfy Assumptions \ref{a1}, \eqref{v1} and \eqref{v2} hold, $\frac 2d<\sigma<\frac{k+2}{k(d-2)_+}$ and $v_{m,l} \in S(m,l)$ be the mountain pass type solution to \eqref{equ} obtained in Theorem \ref{existence}. Then $v_{m,l}$ is strongly unstable, that is, for any $\eps>0$, there exists $u_0 \in \mathcal{H}^1$ such that $\|v_{m,l}-u_0\|_{\mathcal{H}^1} <\eps$ and the solution to \eqref{equt} with initial datum $u_0$ blows up in finite time.
	\end{thm}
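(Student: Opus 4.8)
The plan is to run the classical concavity/virial scheme of Berestycki--Cazenave, adapted to the rotating and confined setting, using as black boxes the variational characterization of $\gamma(m,l)$ in Lemma \ref{vc} and the virial evolution in Lemma \ref{virial}. The two structural objects are the mass- and angular-momentum-preserving scaling $u_s:=s^{d/2}u(s\cdot)$ and the associated Pohozaev functional
\[
P(u)=\|\nabla u\|_2^2-\int_{\R^d}\langle \nabla V(x),x\rangle|u|^2\,dx-\frac{\lambda d\sigma}{\sigma+1}\|u\|_{2\sigma+2}^{2\sigma+2}=\left.\frac{d}{ds}E(u_s)\right|_{s=1},
\]
whose zero set is $\mathcal{P}(m,l)$. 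I would first record two facts that make the argument possible: by Lemma \ref{vc} (where \eqref{v1}--\eqref{v2} force $s\mapsto E(u_s)$ to have the correct mountain-pass shape) one has $E(u)\ge\gamma(m,l)$ for every $u\in\mathcal{P}(m,l)$; and by Lemma \ref{virial} the variance obeys $\frac{d^2}{dt^2}\mathcal{M}[v(t)]=2P(v(t))$, the rotation term $-\Omega L_zv$ contributing nothing because $|x|^2$ is invariant under the rotations generated by $L_z$. I would also note that, although only $E_{\Omega}$ is the Hamiltonian of \eqref{equt}, the energy $E=E_{\Omega}+\Omega L$ is conserved along the flow, since $V$ radial gives $L_zV\equiv 0$, hence conservation of $L$.

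First I would manufacture the blow-up data by scaling the mountain pass solution. Since $v_{m,l}\in\mathcal{P}(m,l)$ is the maximum point of $s\mapsto E((v_{m,l})_s)$ at $s=1$, for each $s_0>1$ the function $u_0:=(v_{m,l})_{s_0}$ satisfies $u_0\in S(m,l)$, $E(u_0)<E(v_{m,l})=\gamma(m,l)$, and $P(u_0)=s_0\left.\frac{d}{ds}E((v_{m,l})_s)\right|_{s=s_0}<0$. Moreover $u_0\to v_{m,l}$ in $\mathcal{H}^1$ as $s_0\to 1^+$: continuity in the gradient and $L^2$ norms is standard, and $\int_{\R^d}V(x/s_0)|v_{m,l}|^2\,dx\to\int_{\R^d}V(x)|v_{m,l}|^2\,dx$ follows by dominated convergence using Assumption \ref{a1}. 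Thus, given $\eps>0$, fixing $s_0>1$ close enough to $1$ yields $\|u_0-v_{m,l}\|_{\mathcal{H}^1}<\eps$; note also that $\mathcal{M}[u_0]<\infty$, since $k>2$ makes $|x|^2|u_0|^2$ integrable in view of $\int_{\R^d}V|u_0|^2<\infty$.

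The core step is to trap the solution in an invariant region on which $P$ is bounded away from $0$. Let $v$ be the solution to \eqref{equt} with datum $u_0$ given by Lemma \ref{lwp}, and set $\mathcal{A}:=\{u\in S(m,l):E(u)<\gamma(m,l),\ P(u)<0\}$. Because $M$, $L$ and $E$ are conserved, $v(t)\in S(m,l)$ and $E(v(t))=E(u_0)<\gamma(m,l)$ for all $t$. If $P(v(t_\ast))=0$ at some first time $t_\ast$, then $v(t_\ast)\in\mathcal{P}(m,l)$, so $E(v(t_\ast))\ge\gamma(m,l)$ by Lemma \ref{vc}, contradicting $E(v(t_\ast))<\gamma(m,l)$; since $t\mapsto P(v(t))$ is continuous and negative at $t=0$ it stays negative, so $v(t)\in\mathcal{A}$ for all $t$. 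To upgrade this to a uniform bound I would establish the comparison inequality
\[
P(u)\le 2\big(E(u)-\gamma(m,l)\big),\qquad u\in\mathcal{A},
\]
by analyzing $g(s)=E(u_s)$: its relevant maximum is attained at some $s^{\ast}\le 1$ with $u_{s^{\ast}}\in\mathcal{P}(m,l)$, whence $E(u_{s^{\ast}})\ge\gamma(m,l)$, and the explicit $s$-dependence of the kinetic, potential and nonlinear terms --- controlled by \eqref{v1}--\eqref{v2} --- yields $g'(1)\le 2(g(1)-g(s^{\ast}))$. Applied along the flow, this gives $P(v(t))\le 2(E(u_0)-\gamma(m,l))=:-2c<0$ for all $t$.

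Finally I would close by convexity: Lemma \ref{virial} gives $\frac{d^2}{dt^2}\mathcal{M}[v(t)]=2P(v(t))\le -4c$ on the maximal interval, so $\mathcal{M}[v(t)]\le \mathcal{M}[u_0]+\mathcal{M}'[u_0]\,t-2c\,t^2$, which turns negative in finite time and contradicts $\mathcal{M}[v(t)]\ge 0$; hence $T_{max}<\infty$ and $v$ blows up, proving strong instability. I expect the main obstacle to be the comparison inequality and, underneath it, the verification through Lemma \ref{vc} that the confining term $\int_{\R^d}V(x/s)|u|^2\,dx$ --- which, unlike the pure-power case, blows up as $s\to 0^+$ and makes $s\mapsto E(u_s)$ genuinely non-standard (a priori with more than one critical point) --- still produces a scaling profile with a single relevant maximum and the monotonicity needed for the bound; this is precisely the role of \eqref{v1} and \eqref{v2}. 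A secondary technical point, propagation of finite variance so that Lemma \ref{virial} applies rigorously, is handled by that lemma together with the finiteness of $\mathcal{M}[u_0]$.
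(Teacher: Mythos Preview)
Your overall strategy matches the paper's: scale $v_{m,l}$ to obtain nearby data with $E<\gamma(m,l)$ and $P<0$, show this region is flow-invariant, extract a uniform negative upper bound on $P$ along the orbit, and close with the virial identity of Lemma \ref{virial}. The paper proceeds exactly this way.

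There is, however, a genuine gap in your invariance and comparison steps. You assert that Lemma \ref{vc} gives $E(u)\ge\gamma(m,l)$ for every $u\in\mathcal{P}(m,l)$; this is false. Lemma \ref{vc} only characterizes $\gamma(m,l)$ as $\inf_{\mathcal{P}^-(m,l)}E$, and the local minimizer $u_{m,l}$ lies in $\mathcal{P}^+(m,l)$ with $E(u_{m,l})=e(m,l)<\gamma(m,l)$. Thus if the flow were to hit $P=0$ at a point of $\mathcal{P}^+$, your contradiction collapses. This is precisely where the two-critical-point shape of $s\mapsto E(u_s)$ --- which you correctly flag as the main obstacle --- actually bites, and your proposal does not resolve it. The same issue undermines your comparison inequality, whose claimed bound $g'(1)\le 2(g(1)-g(s^\ast))$ is left unproved.

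The paper fixes both points simultaneously by working with Lemma \ref{maxi} rather than Lemma \ref{vc}. It tracks the second critical point $s_{u(t),2}$ along the flow: starting from $u_0=(v_{m,l})_{s_0}$ one has $s_{u_0,2}<1$, and whenever $s_{u,2}<1$ the concavity of $s\mapsto E(u_s)$ on $[s_{u,2},\infty)$ together with the mean value theorem yields
\[
E(u)-E(u_{s_{u,2}})\ \ge\ P(u)\,(1-s_{u,2})\ >\ P(u),
\]
hence $P(u)\le E(u)-\gamma(m,l)$ since $u_{s_{u,2}}\in\mathcal{P}^-(m,l)$. A short contradiction argument then shows $s_{u(t),2}<1$ persists, giving at once invariance and the uniform bound $P(u(t))\le -(\gamma(m,l)-E(u_0))$ with constant $1$.
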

	
	The paper is organized as follow. In Section \ref{exist}, we shall prove the existence of solutions to \eqref{equ} and \eqref{c} and present the proof of Theorem \ref{existence}. Section \ref{dynamics} is devoted to the discussion of orbital stability/instability of solutions to \eqref{equ} and \eqref{c} and contains the proofs of Theorems \ref{stable} and \ref{unstable}. Finally, in Section \ref{regularity}, we shall discuss regularity of minimizers to \eqref{gmin1} and \eqref{lmin1}, respectively.
    
    \section{Existence of Solutions} \label{exist}
	
	In this section, we are going to prove the existence of solutions to \eqref{equ} and \eqref{c} and establish Theorem \ref{existence}. For this, we first introduce the corresponding Pohozaev functional related to \eqref{equ} defined by
	$$
	P(u):=\int_{\R^d}|\nabla u |^2 - \langle \nabla V(x), x \rangle |u|^2 - \frac{\lambda \sigma d}{\sigma+1} |u|^{2 \sigma +2} \,dx.
	$$
	Indeed, if $u \in \mathcal{H}^1$ is a solution to \eqref{equ}, then $P(u)=0$.
    \begin{rem}
    The Pohozaev identity $P(u)=0$ can be proved by multiplying \eqref{equ} by $x\cdot \nabla\overline{u}$ and $\overline{u}$ respectively and integrating over $\R^{d}$. It is important to note that the rotation term does not appear in $P(u)$.
    \end{rem}
    Throughout this section, we shall always assume that the assumptions of Theorem \ref{existence} hold. Moreover, we denote by $u_s$ a scaling of $u \in \mathcal{H}^1$ defined by
	\begin{equation}\label{scaling}
	    	u_s=s^{\frac d2}u(s \cdot), \quad s>0.
	\end{equation}
	To begin with, we shall derive the existence of local minimizers.
	
	
	\begin{lem} \label{mps0}
		For any $r>0$, there exists $m_0>0$ such that, for any $0<m<m_0$,
		$$
		\inf_{u \in B(r/16)}E(u)<\inf_{u \in B(r) \backslash B(r/2)}E(u),
		$$
		where $B(r):=\left\{ u \in S(m, l) :  \|u\|^2 \leq  r \right\}$ and $\|u\|:=\|\nabla u\|_2 + \|V^{1/2} u\|_2$.
	\end{lem}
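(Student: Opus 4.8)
The plan is to show that throughout the ball $B(r)$ the focusing term of $E$ is uniformly negligible once the mass $m$ is small, so that $E$ is pinched between two fixed multiples of $\|u\|^2$; since $\|u\|^2$ is small on $B(r/16)$ but bounded below by $r/2$ on the annulus $B(r)\setminus B(r/2)$, the asserted strict inequality then follows. The starting point is an elementary two-sided comparison between the positive quadratic part of $E$ and the norm: writing $a:=\|\nabla u\|_2$ and $b:=\|V^{1/2}u\|_2$, the inequalities $a^2+b^2\geq \tfrac12(a+b)^2$ and $\tfrac12 a^2+b^2\leq (a+b)^2$ give
\[
\tfrac14\|u\|^2\;\leq\;\tfrac12\|\nabla u\|_2^2+\|V^{1/2}u\|_2^2\;\leq\;\|u\|^2,\qquad u\in\mathcal{H}^1 .
\]
First I would bound the nonlinearity uniformly on $B(r)$. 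For $u\in S(m,l)$ with $\|u\|^2\leq r$ one has $\|\nabla u\|_2\leq\|u\|\leq\sqrt r$, so \eqref{gn} together with $\|u\|_2^2=m$ yields
\[
\frac{\lambda}{\sigma+1}\|u\|_{2\sigma+2}^{2\sigma+2}\;\leq\;\frac{\lambda C}{\sigma+1}\,\|\nabla u\|_2^{d\sigma}\,m^{\theta}\;\leq\;\frac{\lambda C}{\sigma+1}\,r^{\frac{d\sigma}{2}}\,m^{\theta}=:\kappa(m),\qquad \theta:=\frac{\sigma(2-d)+2}{2}.
\]
The decisive observation is that $\theta$ is strictly positive: it equals $1$ for $d=2$ and $\frac{2-\sigma}{2}$ for $d=3$, the latter being positive exactly because $\sigma<\frac{2}{(d-2)_+}=2$. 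Hence $\kappa(m)\to0$ as $m\to0^+$.

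Combining these, on the annulus any $u\in B(r)\setminus B(r/2)$ has $\|u\|^2>r/2$, so
\[
E(u)\;\geq\;\tfrac14\|u\|^2-\kappa(m)\;>\;\tfrac r8-\kappa(m),
\]
and therefore $\inf_{B(r)\setminus B(r/2)}E\geq \tfrac r8-\kappa(m)$. On the inner ball, since $\lambda>0$ gives $E(u)\leq \tfrac12\|\nabla u\|_2^2+\|V^{1/2}u\|_2^2\leq\|u\|^2$, every $u\in B(r/16)$ satisfies $E(u)\leq \tfrac r{16}$, whence $\inf_{B(r/16)}E\leq \tfrac r{16}$; here I would also record that $B(r/16)$ is nonempty in the range of $r$ under consideration, so that this infimum is genuinely finite.

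It then remains to fix the threshold. I would choose $m_0$ so that $\kappa(m)<\tfrac r{16}$ for all $0<m<m_0$; since $\theta>0$ this is exactly $m<\big(\tfrac{\sigma+1}{16\lambda C}\,r^{1-\frac{d\sigma}{2}}\big)^{1/\theta}=:m_0$, an explicit bound depending only on $r,\sigma,d$ and the fixed constants $\lambda,C$ (this being the estimate referred to as \eqref{below}). For such $m$ one concludes
\[
\inf_{B(r/16)}E\;\leq\;\tfrac r{16}\;<\;\tfrac r8-\kappa(m)\;\leq\;\inf_{B(r)\setminus B(r/2)}E .
\]
The only genuinely delicate points are the positivity of $\theta$ — where the upper-critical restriction $\sigma<\frac{2}{(d-2)_+}$ enters, guaranteeing that on the a priori bounded set $B(r)$ the focusing term behaves subcritically in the mass and vanishes as $m\to0$ — together with the nonemptiness of $B(r/16)$; the remaining manipulations are elementary.
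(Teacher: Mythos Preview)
Your argument is correct and essentially identical to the paper's own proof: both use the Gagliardo--Nirenberg inequality \eqref{gn} together with the two-sided comparison $\tfrac14\|u\|^2\le \tfrac12\|\nabla u\|_2^2+\|V^{1/2}u\|_2^2\le \|u\|^2$ to show that the infimum over the annulus exceeds $r/16$ while the infimum over the inner ball is at most $r/16$. The only cosmetic difference is that the paper factors the lower bound as $E(u)\ge\bigl(\tfrac14-C\|u\|^{d\sigma-2}m^{\theta}\bigr)\|u\|^2$ whereas you keep the nonlinear term as a uniform additive error $\kappa(m)$; your explicit remarks on the positivity of $\theta$ and the nonemptiness of $B(r/16)$ are in fact more careful than what the paper records.
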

	\begin{proof}
		Using Gagliardo-Nirenberg's inequality \eqref{gn}, we find that, for any $u \in S(m,l)$,
		\begin{align} \label{below}
			\begin{split}
				E(u) &\geq \int_{\R^d} \frac 12 |\nabla u |^2  + V(x) |u|^2 \,dx - C \left(\int_{\R^d} |\nabla u|^2 \,dx\right)^{\frac{d\sigma}{2}} m^{\frac{\sigma}{2}(2-d) +1} \\
				&\geq\left(\frac 14 -C \|u\|^{d\sigma-2} m^{\frac{\sigma}{2}(2-d) +1} \right) \|u\|^2.
			\end{split}
		\end{align}
		Then, for any $r>0$, there exists $m_0>0$ such that, for any $0<m<m_0$,
		\begin{align} \label{below1}
			\inf_{u \in B(r) \backslash B(r/2)}E(u) > \frac {r}{16}.
		\end{align}
		Observe that
		$$
		E(u) \leq \int_{\R^d} \frac 12 |\nabla u |^2  + V(x) |u|^2 \,dx.
		$$
		Therefore, we have that
		\begin{align} \label{below2}
			\inf_{u \in B(r/16)}E(u) \leq \frac {r}{16}.
		\end{align}
		Hence the proof is complete.
	\end{proof}
	
	\begin{thm} \label{thm1}
		For any $r>0$, there exists $m_0>0$ such that, for any $0<m<m_0$, \eqref{equ} and \eqref{c} has a solution $u_{m ,l} \in B(r)\subset S(m, l)$ with $E(u_{m,l})>0$. 
	\end{thm}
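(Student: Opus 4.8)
The plan is to obtain $u_{m,l}$ as a minimizer of the local problem \eqref{lmin1}, i.e. of $e(m,l)=\inf_{u\in B(r)}E(u)$, and then to show that the minimizer lies strictly inside $B(r)$ so that it satisfies the Euler--Lagrange equation \eqref{equ}. Fix $r>0$ and let $m_0>0$ be the constant of Lemma \ref{mps0} (shrunk further below if needed). First I would take a minimizing sequence $(u_n)\subset B(r)$ for $e(m,l)$. Since $\|u_n\|^2\le r$ and $\|u_n\|_2^2=m$, this sequence is bounded in $\mathcal{H}^1$. The role of Lemma \ref{mps0} is exactly to confine it to the inner ball: because $e(m,l)\le \inf_{B(r/16)}E<\inf_{B(r)\backslash B(r/2)}E$, for $n$ large one has $u_n\notin B(r)\backslash B(r/2)$, and as $u_n\in B(r)$ this forces $u_n\in B(r/2)$. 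Up to a subsequence, $u_n\wto u$ in $\mathcal{H}^1$.

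The decisive step is compactness. By Lemma \ref{cembedding}, applied both with exponent $2\sigma+2$ and with the choice $\sigma=0$, I get $u_n\to u$ strongly in $L^{2\sigma+2}(\R^d)$ and in $L^2(\R^d)$; in particular $M(u)=m$ and $\int_{\R^d}|u_n|^{2\sigma+2}\to\int_{\R^d}|u|^{2\sigma+2}$. The subtle point is the angular momentum, for which I would first upgrade the $L^2$-convergence to $\|x_j(u_n-u)\|_2\to 0$ for $j=1,2$. Splitting $\R^d$ into $\{|x|\le R_0\}$ and $\{|x|>R_0\}$, the inner part is controlled by $R_0^2\|u_n-u\|_2^2\to 0$, while on the outer part Assumption \ref{a1} yields $|x|^2\le C\langle x\rangle^{2-k}V(x)$, so that $\int_{|x|>R_0}|x_j|^2|u_n-u|^2\le C R_0^{2-k}\|V^{1/2}(u_n-u)\|_2^2=O(R_0^{2-k})$ uniformly in $n$ (the last factor being bounded since $V^{1/2}u_n$ is bounded in $L^2$); letting $n\to\infty$ and then $R_0\to\infty$ gives $x_j u_n\to x_j u$ in $L^2$. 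Pairing this strong convergence against the weak convergence $\partial_{x_i}u_n\wto\partial_{x_i}u$ in the bilinear expression $L(u_n)=-\textnormal{i}\int_{\R^d}\overline{u_n}\,(x_1\partial_{x_2}u_n-x_2\partial_{x_1}u_n)\,dx$ yields $L(u_n)\to L(u)$, hence $L(u)=l$ and $u\in S(m,l)$. By weak lower semicontinuity of $\|\nabla\cdot\|_2$ and $\|V^{1/2}\cdot\|_2$ and $u_n\in B(r/2)$ I get $\|u\|^2\le r/2$, so $u\in B(r/2)\subset B(r)$; and weak lower semicontinuity of the quadratic part of $E$ together with the strong convergence of the nonlinear term gives $E(u)\le\liminf_n E(u_n)=e(m,l)$, so $u=:u_{m,l}$ attains the infimum.

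Next I would extract the equation. Since $\|u_{m,l}\|^2\le r/2<r$, the minimizer lies in the interior of $B(r)$, hence is a local minimizer of $E$ constrained to $S(m,l)$ alone. The constraint functionals $G_1,G_2$ are $C^1$, and because $l>0$ forces $u_{m,l}\neq 0$ their differentials are linearly independent at $u_{m,l}$; this is exactly the content of Corollary \ref{thm5}. Lagrange's multiplier theorem then provides $\omega,\Omega\in\R$ with $E'(u_{m,l})=\omega\,G_1'(u_{m,l})+\Omega\,G_2'(u_{m,l})$, which is precisely \eqref{equ} together with \eqref{c}. Finally, positivity of $E(u_{m,l})$ follows directly from \eqref{below}: choosing $m_0$ so small that $C\,(r/2)^{(d\sigma-2)/2}\,m_0^{\frac{\sigma}{2}(2-d)+1}<\tfrac14$ (the exponent of $m$ is positive since $\tfrac2d<\sigma<\tfrac{2}{(d-2)_+}$), the prefactor in \eqref{below} is positive for $u_{m,l}\in B(r/2)$, and $\|u_{m,l}\|>0$ (because $M(u_{m,l})=m>0$) gives $E(u_{m,l})>0$.

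The hard part will be the compactness, and within it specifically the passage $L(u_n)\to L(u)$: the angular momentum is not weakly continuous on $\mathcal{H}^1$, since it couples $\overline{u_n}$ with $\nabla u_n$. What rescues the argument is the super-quadratic growth of $V$, which promotes the $L^2$-compactness of Lemma \ref{cembedding} to the weighted convergence $\|x_j(u_n-u)\|_2\to 0$ needed to pass to the limit in the bilinear momentum term; this is where the confinement hypothesis in Assumption \ref{a1} is genuinely used, and it is the feature that distinguishes the present double-constraint setting from the single mass constraint.
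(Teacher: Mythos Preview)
Your approach matches the paper's: local minimization on $B(r)$, compactness via Lemma~\ref{cembedding} together with the super-quadratic growth of $V$ for the angular momentum constraint, Lemma~\ref{mps0} to ensure the minimizer is interior, and then Lagrange multipliers via Corollary~\ref{thm5}. One point needs correction, though: you assert that $l>0$ forces $G_1'(u_{m,l})$ and $G_2'(u_{m,l})$ to be linearly independent and cite Corollary~\ref{thm5} for this, but that corollary (through Proposition~\ref{thm4}) does \emph{not} give unconditional regularity. If $l/m\in\Z$ and $u_{m,l}$ is an eigenfunction of $L_z$ at that eigenvalue, the differentials are linearly dependent; what Corollary~\ref{thm5} guarantees is that $u_{m,l}$ nevertheless solves \eqref{equ}, in that case with $\Omega=0$. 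So your conclusion survives, but the justification must be the full dichotomy of Corollary~\ref{thm5}, not linear independence alone. A minor difference: the paper obtains $E(u_{m,l})>0$ from the Pohozaev identity $P(u_{m,l})=0$ via $E-\tfrac{1}{\sigma d}P$, whereas you use \eqref{below} directly with $u_{m,l}\in B(r/2)$; both arguments are valid.
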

	\begin{proof}
		Define 
		\begin{align} \label{lmin}
			e(m, l)=\inf_{u \in B(r)} E(u).
		\end{align}
		It is easy to see that $e(m,l)>-\infty$, {because of \eqref{below}.}
		 Let $\{u_n\} \subset B(r)$ be a minimizing sequence to \eqref{lmin}. Observe that $\{u_n\}$ is bounded in $\mathcal{H}^1$. Then there exists $u_{m,l} \in \mathcal{H}^1$ such that $u_n \wto u_{m,l}$ in $\mathcal{H}^1$ as $n \to \infty$. By Lemma \ref{cembedding}, we know that $u_n \to u_{m,l}$ in $L^{2+2\sigma}(\R^d)$ as $n \to \infty$ for any $0 \leq \sigma<\frac{2}{(d-2)^+}$. It follows that $G_1(u_{m,l})=0$ and 
		$$
		E(u_{m,l}) \leq \liminf_{n \to \infty} E(u_n).
		$$
		At this point, to achieve that $u_{m,l}$ is a minimizer to \eqref{lmin}, we only need to show that $G_2(u_{m,l})=0$. This can be done analogously to the steps proposed in the proof of \cite[Proposition 2.4]{NSS}.
		{It follows from Lemma \ref{mps0} that $u_{m,l} \in S(m,l)$ is an interior minimizer to \eqref{lmin}. Then, using Corollary \ref{thm5}, we have that it is a solution to \eqref{equ} and \eqref{c}. }
		Next we show that $e(m,l)>0$. Since $u_{m,l}$ is a solution to \eqref{equ}, then $P(u_{m,l})=0$. Observe that
		\begin{align*}
			E(u_{m,l})&=E(u_{m,l})-\frac{1}{\sigma d} P(u_{m,l})\\
			&=\left(\frac 12 -\frac{1}{\sigma d}\right) \int_{\R^d} |\nabla u_{m,l}|^2 \,dx + \int_{\R^d} V(x) |u_{m,l}|^2 \,dx + \frac{1}{\sigma d} \int_{\R^d} \langle \nabla V(x), x \rangle |u_{m,l}|^2 \,dx,
		\end{align*}
		which implies that $E(u_{m,l})>0$. This completes the proof.
	\end{proof}
	
	Let $r>0$ be given and $m_0>0$ be the constant determined in Lemma \ref{mps0}. In the following, we are going to establish the existence of mountain pass type solutions to \eqref{equ} and \eqref{c}. To this end, we first introduce the mountain pass energy level.
	
	\begin{lem} \label{mps}
		Define
		$$
		\gamma(m, l):=\inf_{g \in \Gamma(m ,l)} \max_{0 \leq \tau \leq 1} E(g(\tau)),
		$$
		$$
		\Gamma(m,l):=\left\{ g \in C([0, 1], S(m ,l)) : g(0)=u_{m, l}, g(1)=(u_{m, l})_{s_0}\right\},
		$$
		where $u_{m,l} \in B(r)\subset S(m,l)$ is the solution obtained in Theorem \ref{thm1}, ${(u_{m,l})_{s_0}}$ is defined by \eqref{scaling} and $s_0>0$ is a large constant such that $E((u_{m,l})_{s_0})<0$. Then it holds that
		$$
		\gamma(m, l) >\max\left\{E(u_{m, l}), E((u_{m,l})_{s_0})\right\}>0.
		$$
	\end{lem}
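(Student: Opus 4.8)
The plan is to show that $\gamma(m,l)$ strictly exceeds $E(u_{m,l})$ by producing a uniform energy barrier that every admissible path in $\Gamma(m,l)$ must climb over. First I would dispose of the maximum on the right-hand side: since $s_0$ is chosen so that $E((u_{m,l})_{s_0})<0$ while $E(u_{m,l})>0$ by Theorem \ref{thm1}, one has $\max\{E(u_{m,l}),E((u_{m,l})_{s_0})\}=E(u_{m,l})>0$. Hence it suffices to prove $\gamma(m,l)>E(u_{m,l})$, and then the trailing inequality $>0$ is automatic.

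The barrier comes entirely from the strict gap in Lemma \ref{mps0}. Because $u_{m,l}$ minimizes $E$ over the whole of $B(r)$, I would record the chain $E(u_{m,l})=\inf_{B(r)}E\le\inf_{B(r/16)}E<\inf_{B(r)\setminus B(r/2)}E=:A$, where the middle step is just the inclusion $B(r/16)\subset B(r)$ and the last, strict, step is Lemma \ref{mps0}. Thus $A>E(u_{m,l})$, and $A$ will serve as the uniform barrier height.

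Next I would locate the two endpoints of an arbitrary path $g\in\Gamma(m,l)$ relative to the annulus $B(r)\setminus B(r/2)$. The initial point $g(0)=u_{m,l}$ satisfies $\|u_{m,l}\|^2\le r/2$: if instead $\|u_{m,l}\|^2\in(r/2,r]$, then $u_{m,l}\in B(r)\setminus B(r/2)$, forcing $E(u_{m,l})\ge A>E(u_{m,l})$, a contradiction. The terminal point $g(1)=(u_{m,l})_{s_0}$ satisfies $\|(u_{m,l})_{s_0}\|^2>r$, since $E((u_{m,l})_{s_0})<0<E(u_{m,l})=\inf_{B(r)}E$ rules out membership in $B(r)$. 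As $\tau\mapsto\|g(\tau)\|^2$ is continuous on $[0,1]$ and passes from a value $\le r/2$ at $\tau=0$ to a value $>r$ at $\tau=1$, the intermediate value theorem provides $\tau_g\in[0,1]$ with $\|g(\tau_g)\|^2=3r/4$, so that $g(\tau_g)\in B(r)\setminus B(r/2)$.

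Finally I would take the min-max. For each $g\in\Gamma(m,l)$ the crossing point gives $\max_{0\le\tau\le1}E(g(\tau))\ge E(g(\tau_g))\ge A$, a lower bound independent of $g$; passing to the infimum over $g$ yields $\gamma(m,l)\ge A>E(u_{m,l})$, which is precisely the assertion. The only mildly technical ingredient is the crossing argument, which relies on the continuity of $u\mapsto\|u\|$ along the $\mathcal{H}^1$-continuous path $g$. I expect the substantive point to be not this topology but the strict separation supplied by Lemma \ref{mps0}; once that gap is available, the conclusion is essentially bookkeeping, so no real obstacle remains.
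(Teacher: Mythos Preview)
Your proposal is correct and follows essentially the same route as the paper: both arguments locate the endpoints on opposite sides of the annulus $B(r)\setminus B(r/2)$, invoke the intermediate value theorem to force a crossing at $\|g(\tau_g)\|^2=3r/4$, and then use the strict gap of Lemma~\ref{mps0} together with $E(u_{m,l})=\inf_{B(r)}E$ to obtain the barrier. The only item the paper includes that you omit is an explicit verification that $\Gamma(m,l)\neq\emptyset$, via the linear-interpolation path $g(\tau)=\big((1-\tau)+\tau s_0\big)^{d/2}u_{m,l}\big(((1-\tau)+\tau s_0)\,\cdot\big)$; this is not needed for the bare inequality (an empty infimum is $+\infty$) but is worth recording since the mountain pass machinery downstream requires $\gamma(m,l)<+\infty$.
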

	\begin{proof}
		Define
		$$
		g(\tau):=\left(1-\tau+ \tau s_0\right)^{\frac d 2}u_{m,l}((1-\tau)x + \tau s_0 x), \quad 0 \leq \tau \leq 1.
		$$
		Clearly, we have that $ g \in \Gamma(m,l)$. This shows that $\Gamma(m, l) \neq \emptyset$. Note that for any $g \in \Gamma(m,l)$, $g(0)=u_{m,l}$ and $g(1)=(u_{m,l})_{s_0}$. In addition, from Lemma \ref{mps0} and Theorem \ref{thm1}, we know that $\|u_{m,l}\|^2 < r/2$ and $\|(u_{m,l})_{s_0}\|^2>r$. As a consequence, there exists $0<\tau_0<1$ such that $\|g(\tau_0)\|^2=3r/4$. It then follows from Lemma \ref{mps0} that
		\begin{align*}
			\max_{0 \leq \tau \leq 1} E(g(t)) \geq E(g(\tau_0)) \geq \inf_{u \in B(r) \backslash B(r/2)}E(u) >\inf_{u \in B(r/16)}E(u) \geq \inf_{u \in B(r)} E(u)=E(u_{m,l})>0.
		\end{align*}
		This completes the proof.
	\end{proof}
	
	To find a Palais-Smale sequence of $E$ restricted on $S(m,l)$ at the level $\gamma(m,l)$ around the associated Pohozaev manifold, we shall introduce the following auxiliary functional $F: \mathcal{H}^1 \times \R \to \R$ as
	$$
	F(u, s):=E(s \ast u)=\int_{\R^d} \frac {e^{2s}}{2} |\nabla u|^2 + V\left(\frac{x}{e^s}\right) |u|^2 - \frac{\lambda e^{\sigma ds}}{\sigma+1} |u|^{2 \sigma +2} \,dx,
	$$
	where $s \ast u:=e^{ds/2} u(e^s \cdot)$. This is inspired by the ideas in \cite{Jeanjean}.
	
	\begin{lem} \label{mps1}
		Define
		$$
		\widetilde{\gamma}(m,l):=\inf_{\widetilde{g} \in \widetilde{\Gamma}(m,l)} \max_{0 \leq \tau \leq 1} F(\widetilde{g}(\tau)),
		$$
		$$
		\widetilde{\Gamma}(m,l):=\left\{\widetilde{g} \in C([0, 1], S(m, l) \times \R) : \widetilde{g}(0)=(u_{m,l}, 0), \widetilde{g}(1)=((u_{m,l})_{s_0}, 0) \right\}.
		$$
		Then
		it holds that $\widetilde{\gamma}(m,l)=\gamma(m,l)$.
	\end{lem}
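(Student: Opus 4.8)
The plan is to establish the equality by proving the two inequalities $\widetilde{\gamma}(m,l) \le \gamma(m,l)$ and $\gamma(m,l) \le \widetilde{\gamma}(m,l)$ separately. The whole argument rests on three structural facts: the defining identity $F(u,s) = E(s \ast u)$; the normalization $0 \ast u = u$ (since $e^{0}=1$); and the fact, already recorded in the computation preceding \eqref{equt}, that the scaling $u \mapsto s \ast u = u_{e^s}$ leaves both constraints invariant, i.e. $M(s \ast u) = M(u)$ and $\langle L_z(s\ast u), s\ast u\rangle_2 = \langle L_z u, u\rangle_2$. In particular $u \mapsto s \ast u$ maps $S(m,l)$ into itself for every $s \in \R$, which is exactly what allows paths in the product space $S(m,l)\times\R$ to be pushed down to paths in $S(m,l)$ and vice versa.

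For the inequality $\widetilde{\gamma}(m,l) \le \gamma(m,l)$, I would take an arbitrary $g \in \Gamma(m,l)$ and lift it by setting $\widetilde{g}(\tau) := (g(\tau), 0)$. Since $g$ is continuous into $S(m,l)$, $\widetilde{g}$ is continuous into $S(m,l)\times\R$, and its endpoints are $\widetilde{g}(0)=(u_{m,l},0)$ and $\widetilde{g}(1)=((u_{m,l})_{s_0},0)$, so $\widetilde{g}\in\widetilde{\Gamma}(m,l)$. Because $F(\widetilde{g}(\tau)) = F(g(\tau),0) = E(0 \ast g(\tau)) = E(g(\tau))$, the two maxima along the path coincide; taking the infimum over $g \in \Gamma(m,l)$ yields the claim. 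For the reverse inequality $\gamma(m,l) \le \widetilde{\gamma}(m,l)$, I would take an arbitrary $\widetilde{g}\in\widetilde{\Gamma}(m,l)$, write its continuous components as $\widetilde{g}(\tau) = (h(\tau), s(\tau))$ with $h$ valued in $S(m,l)$ and $s$ real-valued, and project by defining $g(\tau) := s(\tau) \ast h(\tau)$. By the invariance of the constraints, $g(\tau)\in S(m,l)$ for every $\tau$; evaluating at the endpoints, where $s(0)=s(1)=0$, gives $g(0)= 0\ast u_{m,l}=u_{m,l}$ and $g(1)=0\ast(u_{m,l})_{s_0}=(u_{m,l})_{s_0}$, so $g\in\Gamma(m,l)$. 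Since $E(g(\tau)) = E(s(\tau)\ast h(\tau)) = F(h(\tau),s(\tau)) = F(\widetilde{g}(\tau))$, we obtain $\gamma(m,l) \le \max_{\tau} E(g(\tau)) = \max_{\tau} F(\widetilde{g}(\tau))$, and infimizing over $\widetilde{g}$ closes the argument.

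The one point that is not purely formal, and which I expect to be the main obstacle, is verifying that the projected map $g$ is continuous into $\mathcal{H}^1$; equivalently, that the action $(u,s) \mapsto s \ast u$ is jointly continuous from $\mathcal{H}^1 \times \R$ into $\mathcal{H}^1$. The kinetic part $\nabla(s\ast u) = e^{(d/2+1)s}(\nabla u)(e^s\cdot)$ and the nonlinear part scale continuously in the relevant norms, so the delicate term is the potential energy $\|V^{1/2}(s \ast u)\|_2^2 = \int_{\R^d} V(x/e^s)\,|u(x)|^2\,dx$. Its continuity in $s$, uniformly on bounded sets, can be handled using the two-sided polynomial bounds $c_\alpha\langle x\rangle^{k} \le V \le C_\alpha\langle x\rangle^{k}$ from Assumption \ref{a1} together with dominated convergence, noting that for $s$ ranging over a compact interval the factors $V(x/e^s)/V(x)$ are uniformly bounded. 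Joint continuity in $(u,s)$ then follows by combining this with the continuity in $u$ for fixed $s$, after which the path $g=s(\cdot)\ast h(\cdot)$ is continuous as a composition of continuous maps, completing the plan.
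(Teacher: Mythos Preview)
Your proposal is correct and follows essentially the same route as the paper: both directions are obtained by the natural lift $g\mapsto(g,0)$ and projection $(h,s)\mapsto s\ast h$, using $F(u,s)=E(s\ast u)$ and $0\ast u=u$. You are in fact more careful than the paper, which simply asserts $g\in\Gamma(m,l)$ without addressing the joint continuity of $(u,s)\mapsto s\ast u$ in $\mathcal{H}^1$ that you correctly flag and handle via the polynomial bounds on $V$.
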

	\begin{proof}
		Note that $\Gamma(m,l) \times \{0\} \subset \widetilde{\Gamma}(m,l)$ and $0 \ast g(\tau)=g(\tau)$ for any $0 \leq \tau \leq 1$ and $g \in \Gamma(m,l)$. This immediately shows that 
		$$
		\widetilde{\gamma}(m,l) \leq \gamma(m,l).
		$$ 
		Let $\widetilde{g} \in \widetilde{\Gamma}(m,l)$ be such that $\widetilde{g}(\tau):=({h}_1(\tau), h_2(\tau)) \in S(m,l) \times \R$ for $0 \leq \tau \leq 1$, where $h_1(0)=u_{m,l}$, $h_1(1)={(u_{m,l})}_{s_0}$, and $h_2(0)=h_2(1)=0$. Define $g(\tau):=h_2(\tau) \ast h_1(\tau)$ for $0 \leq \tau \leq 1$, then $g \in \Gamma(m, l)$. And we have that $F(\widetilde{g}(\tau))=E(g(\tau))$. For any $ \epsilon>0$, we can find that there exists $\widetilde{g} \in \widetilde{\Gamma}(m,l)$ such that 
        $$
        \max_{0\leq \tau\leq 1}F(\widetilde{g}(\tau))={\widetilde{\gamma}(m,l)}+\epsilon.
        $$ 
        This in turn implies that 
		$$
		\gamma(m,l) \leq \max_{0\leq \tau \leq 1} E(g(\tau))=\max_{0\leq \tau \leq 1} F(\widetilde{g}(\tau)) = \widetilde{\gamma}(m,l)+\epsilon.
		$$ 
		Hence the proof is complete.
	\end{proof}
	
	\begin{lem} \label{mps2}
		Let $\eps>0$ and $\widetilde{g}_0 \in \widetilde{\Gamma}(m,l)$ be such that
		$$
		\max_{0 \leq \tau\leq 1} F(\widetilde{g}_0(\tau)) \leq \widetilde{\gamma}(m,l) +\eps.
		$$
		Then there exists $(u_0, s_0) \in S(m,l) \times \R$ such that
		\begin{itemize}
			\item[$(\textnormal{i})$] $\widetilde{\gamma}(m,l)-\eps \leq F(u_0, s_0) \leq \widetilde{\gamma}(m,l)+ \eps$;
			\item[$(\textnormal{ii})$] $\min_{0 \leq \tau \leq 1} \left\|(u_0, s_0)-\widetilde{g}_0(\tau)\right\|_{\mathcal{H}^1 \times \R} \leq \sqrt{\eps}$;
			\item[$(\textnormal{iii})$] $\left|\langle F'(u_0, s_0), z \rangle \right| \leq 2 \sqrt{\eps} \|z\|_{\mathcal{H}^1 \times \R}$ for any $z \in \widetilde{T}_{(u_0, s_0)}$, where
			$$
			\widetilde{T}_{(u_0, s_0)}:=\left\{(u, s) \in {\mathcal{H}^1 \times \R} : \langle u_0, u \rangle_2 =0, \langle L_z u_0, u \rangle_2=0 \right\}.
			$$
		\end{itemize}
	\end{lem}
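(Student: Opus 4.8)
The plan is to deduce the lemma from Ekeland's variational principle applied on the space of admissible paths, in the spirit of \cite{Jeanjean}. First I would equip the set $\widetilde{\Gamma}(m,l)$ with the uniform metric
$$
\rho(\widetilde{g},\widetilde{h}):=\max_{0\le\tau\le1}\|\widetilde{g}(\tau)-\widetilde{h}(\tau)\|_{\mathcal{H}^1\times\R},
$$
and observe that, since $S(m,l)$ is a closed $C^1$-submanifold of $\mathcal{H}^1$ (the differentials $G_1'=2u$ and $G_2'=2L_zu$ are linearly independent along $S(m,l)$, which is where $l>0$ enters), the pair $(\widetilde{\Gamma}(m,l),\rho)$ is a complete metric space. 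On it I would consider the functional $\Psi(\widetilde{g}):=\max_{0\le\tau\le1}F(\widetilde{g}(\tau))$, which is continuous, hence lower semicontinuous, because $F\in C^1(\mathcal{H}^1\times\R,\R)$ and $[0,1]$ is compact, and which is bounded from below by $\widetilde{\gamma}(m,l)$ by its very definition. By hypothesis $\widetilde{g}_0$ satisfies $\Psi(\widetilde{g}_0)\le\widetilde{\gamma}(m,l)+\eps=\inf_{\widetilde{\Gamma}(m,l)}\Psi+\eps$, so $\widetilde{g}_0$ is an $\eps$-minimizer of $\Psi$.

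Applying Ekeland's variational principle to $\Psi$ at $\widetilde{g}_0$ with parameter $\sqrt{\eps}$ produces a path $\widetilde{g}_\eps\in\widetilde{\Gamma}(m,l)$ with
$$
\widetilde{\gamma}(m,l)\le\Psi(\widetilde{g}_\eps)\le\Psi(\widetilde{g}_0)\le\widetilde{\gamma}(m,l)+\eps,\qquad \rho(\widetilde{g}_\eps,\widetilde{g}_0)\le\sqrt{\eps},
$$
and such that $\Psi(\widetilde{g})>\Psi(\widetilde{g}_\eps)-\sqrt{\eps}\,\rho(\widetilde{g},\widetilde{g}_\eps)$ for every $\widetilde{g}\neq\widetilde{g}_\eps$. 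I would then set $K:=\{\tau\in[0,1]:F(\widetilde{g}_\eps(\tau))=\Psi(\widetilde{g}_\eps)\}$, a nonempty compact set contained in $(0,1)$: the endpoints are excluded because $F(u_{m,l},0)=E(u_{m,l})$ and $F((u_{m,l})_{s_0},0)=E((u_{m,l})_{s_0})$ are both strictly below $\widetilde{\gamma}(m,l)\le\Psi(\widetilde{g}_\eps)$ by Lemma \ref{mps} and Lemma \ref{mps1}. Any point $(u_0,s_0):=\widetilde{g}_\eps(\tau^\ast)$ with $\tau^\ast\in K$ automatically satisfies $(\textnormal{i})$, since $F(u_0,s_0)=\Psi(\widetilde{g}_\eps)\in[\widetilde{\gamma}(m,l),\widetilde{\gamma}(m,l)+\eps]$, and $(\textnormal{ii})$, since $\min_{\tau}\|(u_0,s_0)-\widetilde{g}_0(\tau)\|_{\mathcal{H}^1\times\R}\le\|(u_0,s_0)-\widetilde{g}_0(\tau^\ast)\|_{\mathcal{H}^1\times\R}\le\rho(\widetilde{g}_\eps,\widetilde{g}_0)\le\sqrt{\eps}$. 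It remains to locate $\tau^\ast\in K$ for which $(\textnormal{iii})$ holds as well.

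The heart of the argument is to establish $(\textnormal{iii})$ by contradiction. Suppose that at every $\tau\in K$ the point $\widetilde{g}_\eps(\tau)$ violated $(\textnormal{iii})$, so that there is $z_\tau\in\widetilde{T}_{\widetilde{g}_\eps(\tau)}$ with $\|z_\tau\|_{\mathcal{H}^1\times\R}=1$ and (after a choice of sign) $\langle F'(\widetilde{g}_\eps(\tau)),z_\tau\rangle<-2\sqrt{\eps}$. Using the continuity of $F'$, a partition of unity over the compact set $\widetilde{g}_\eps(K)$, and the fact that the orthogonal projection onto $\widetilde{T}_{(u,s)}$ depends locally Lipschitz-continuously on $(u,s)$ (this projection acts only on the $\mathcal{H}^1$-component, the $\R$-factor being unconstrained, and is well defined precisely by the linear independence of $2u,2L_zu$), I would build a locally Lipschitz vector field $W$ on a neighborhood of $\widetilde{g}_\eps(K)$ in $S(m,l)\times\R$, tangent to the constraint, with $\|W\|\le1$ and $\langle F'(\cdot),W(\cdot)\rangle<-2\sqrt{\eps}$ there. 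Letting $\Phi$ denote the flow generated by $W$, which remains on $S(m,l)\times\R$, and choosing a cutoff $\chi\in C([0,1],[0,1])$ equal to $1$ on $K$ and vanishing near $\{0,1\}$ and outside a small neighborhood of $K$, the deformed curve $\widetilde{g}(\tau):=\Phi(h\chi(\tau),\widetilde{g}_\eps(\tau))$ again lies in $\widetilde{\Gamma}(m,l)$ for $h>0$ small (endpoints and both constraints are preserved), obeys $\rho(\widetilde{g},\widetilde{g}_\eps)\le h$, and a first-order expansion of $F$ along the flow yields $\Psi(\widetilde{g})\le\Psi(\widetilde{g}_\eps)-\tfrac32\sqrt{\eps}\,h$ for $h$ small. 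This contradicts the Ekeland inequality $\Psi(\widetilde{g})>\Psi(\widetilde{g}_\eps)-\sqrt{\eps}\,\rho(\widetilde{g},\widetilde{g}_\eps)$, so some $\tau^\ast\in K$ must yield a point satisfying $(\textnormal{iii})$, completing the proof.

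The main obstacle is exactly this constrained deformation: one must produce a pseudo-gradient field that is simultaneously tangent to the two constraints cutting out $S(m,l)$, locally Lipschitz, and uniformly descending for $F$ near the compact max-set $K$, and then verify that flowing along it keeps the curve in the admissible class $\widetilde{\Gamma}(m,l)$ while lowering the maximum at a rate that strictly beats the Ekeland slope. The non-degeneracy of the constraint differentials $2u$ and $2L_zu$ along $S(m,l)$ is what makes the tangent-space projection, and hence the entire construction, well posed; propagating the quantitative decay estimate uniformly over $K$ and matching it against the $2\sqrt{\eps}$ threshold in $(\textnormal{iii})$ is the delicate bookkeeping.
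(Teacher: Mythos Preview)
Your approach is the same as the paper's: both follow \cite[Lemma~2.3]{Jeanjean}, and you give a faithful sketch of the Ekeland-on-paths-plus-constrained-deformation argument that the paper simply cites without detail.

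One point deserves a flag. You assert that $G_1'(u)=2u$ and $G_2'(u)=2L_zu$ are linearly independent everywhere on $S(m,l)$, attributing this to $l>0$. That is not correct in general: the differentials become dependent precisely when $L_zu=nu$ for some $n\in\R$, which on $S(m,l)$ forces $n=l/m$; this can happen whenever $l/m\in\Z$, and is exactly the regularity issue analyzed later in Section~\ref{regularity}. Completeness of $(\widetilde{\Gamma}(m,l),\rho)$ does not actually need the manifold structure---closedness of $S(m,l)$ in $\mathcal{H}^1$ suffices---but your pseudo-gradient construction does rely on the tangent-space projection being locally Lipschitz in the base point, and that breaks down at such singular points. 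The paper does not resolve this subtlety in its one-line proof either, though it does split into the independent and dependent cases downstream when extracting the Lagrange multipliers in the proof of Theorem~\ref{thm2}. For $l/m\notin\Z$ your argument goes through verbatim; for $l/m\in\Z$ you should at least acknowledge that the deformation near the singular set $\{u\in S(m,l):L_zu=(l/m)u\}$ requires additional care.
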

	\begin{proof}
		Following closely the spirit of the proof of \cite[Lemma 2.3]{Jeanjean}, one can simply get the desired conclusion. We omit the proof for simplicity.
	\end{proof}
	
	\begin{lem} \label{ps}
		There exists a sequence $\{v_n\} \subset S(m,l)$ such that
		\begin{align} \label{pss}
			E(v_n)=\gamma(m,l)+o_n(1), \quad E'\mid_{S(m, l)}(v_n)=o_n(1), \quad P(v_n)=o_n(1).
		\end{align}
	\end{lem}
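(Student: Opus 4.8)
The plan is to follow the scheme of Jeanjean \cite{Jeanjean}, extracting the sequence from an almost critical point of the augmented functional $F$ and exploiting the fact that the $s$-derivative of $F$ reproduces the Pohozaev functional. The argument rests on two scaling identities, which I would record first: for every $(u,s) \in \mathcal{H}^1 \times \R$ and $w \in \mathcal{H}^1$,
$$
\partial_s F(u,s) = P(s \ast u), \qquad \langle \partial_u F(u,s), w \rangle = \langle E'(s \ast u), s \ast w \rangle .
$$
The first follows by differentiating the three terms of $F$ in $s$ and undoing the change of variables $y=e^s x$, which reproduces exactly $\int_{\R^d} |\nabla(s\ast u)|^2 - \langle \nabla V(x),x\rangle |s\ast u|^2 - \frac{\lambda\sigma d}{\sigma+1}|s\ast u|^{2\sigma+2}\,dx = P(s\ast u)$; the second is immediate since $s\ast$ is linear in $u$. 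I would also note that $s\ast$ is an isometry of $L^2(\R^d)$ and commutes with $L_z$, i.e. $L_z(s\ast u)=s\ast(L_z u)$ because dilations commute with the angular derivative $x_1\partial_{x_2}-x_2\partial_{x_1}$; consequently $s\ast$ preserves both constraints and maps $S(m,l)$ onto itself.

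Next I would produce the approximate mountain pass point. Fix $\eps_n \downarrow 0$ and, for each $n$, choose $g_n \in \Gamma(m,l)$ with $\max_{\tau} E(g_n(\tau)) \le \gamma(m,l)+\eps_n$. Setting $\widetilde{g}_n:=(g_n,0)\in\widetilde{\Gamma}(m,l)$ gives $\max_\tau F(\widetilde{g}_n(\tau))\le \widetilde{\gamma}(m,l)+\eps_n$ by Lemma \ref{mps1}, so Lemma \ref{mps2} yields $(u_n,s_n)\in S(m,l)\times\R$ satisfying (i)--(iii). Since the $\R$-component of $\widetilde{g}_n$ vanishes identically, property (ii) forces $|s_n|\le\sqrt{\eps_n}$, hence $s_n\to 0$ and $\{s_n\}$ is bounded. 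I then set $v_n:=s_n\ast u_n\in S(m,l)$. The first two assertions of \eqref{pss} are now immediate: $E(v_n)=F(u_n,s_n)=\gamma(m,l)+o_n(1)$ by (i) and Lemma \ref{mps1}, while testing (iii) with $z=(0,1)\in\widetilde{T}_{(u_n,s_n)}$ and using the first scaling identity gives $|P(v_n)|=|\partial_s F(u_n,s_n)|\le 2\sqrt{\eps_n}\to 0$.

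The main obstacle is the constrained-gradient estimate $E'\!\mid_{S(m,l)}(v_n)=o_n(1)$. Given $\phi$ in the tangent space $T_{v_n}S(m,l)=\{\phi:\langle v_n,\phi\rangle_2=\langle L_z v_n,\phi\rangle_2=0\}$, I would put $w:=(-s_n)\ast\phi$. By the $L^2$-isometry and the commutation with $L_z$ the constraints transfer, namely $\langle v_n,\phi\rangle_2=\langle u_n,w\rangle_2$ and $\langle L_z v_n,\phi\rangle_2=\langle L_z u_n,w\rangle_2$, so that $(w,0)\in\widetilde{T}_{(u_n,s_n)}$; and by the second scaling identity $\langle E'(v_n),\phi\rangle=\langle\partial_u F(u_n,s_n),w\rangle$. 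Property (iii) then bounds this by $2\sqrt{\eps_n}\,\|w\|_{\mathcal{H}^1}$. The only nontrivial point is the uniform comparison $\|w\|_{\mathcal{H}^1}=\|(-s_n)\ast\phi\|_{\mathcal{H}^1}\le C\|\phi\|_{\mathcal{H}^1}$ with $C$ independent of $n$: the gradient and $L^2$ norms scale by $e^{-s_n}$ and $1$, while the potential term equals $\int_{\R^d}V(e^{s_n}y)|\phi|^2\,dy$, which is controlled by $C(s_n)\|\phi\|_{\mathcal{H}^1}^2$ because $V$ grows polynomially (Assumption \ref{a1}) and $s_n$ stays bounded. Taking the supremum over $\|\phi\|_{\mathcal{H}^1}\le 1$ gives $\|E'\!\mid_{S(m,l)}(v_n)\|\le 2C\sqrt{\eps_n}\to 0$, which completes \eqref{pss}.

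I expect this transfer step to be the delicate part of the proof: it is precisely the interplay between the dilation $s\ast$, the two constraints (especially the covariance $L_z(s\ast u)=s\ast(L_z u)$), and the non-isometric behaviour of $\|\cdot\|$ under scaling that makes it impossible to apply a minimax principle to $E$ directly and forces the argument through the augmented functional $F$. The boundedness of $\{s_n\}$ coming from the vanishing $\R$-component of the chosen paths is what keeps the distortion constant $C$ uniform and closes the estimate.
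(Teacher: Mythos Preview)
Your proposal is correct and follows essentially the same route as the paper: choose near-optimal paths $g_n$, lift them to $\widetilde g_n=(g_n,0)$, apply the Ekeland-type Lemma~\ref{mps2} to obtain $(u_n,s_n)$, set $v_n=s_n\ast u_n$, and read off the three conclusions from items (i)--(iii) together with the bound $|s_n|\le\sqrt{\eps_n}$ coming from (ii). Your explicit recording of the identities $\partial_s F(u,s)=P(s\ast u)$ and $\langle\partial_u F(u,s),w\rangle=\langle E'(s\ast u),s\ast w\rangle$, and of the commutation $L_z(s\ast u)=s\ast(L_z u)$, makes the transfer step cleaner than in the paper; in particular your test vector $z=(0,1)$ for the Pohozaev estimate is the right choice (the paper writes $z=(0,0)$, which is evidently a typo since that gives nothing).
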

	\begin{proof}
		Observe first that, for any $n \in \N^+$, there exists $g_n \in \Gamma(m,l)$ such that
		$$
		\max_{0 \leq \tau \leq 1} E(g_n(\tau)) \leq \gamma(m,l) + \frac 1 n.
		$$
		Let $\widetilde{g}_n=(g_n, 0) \in \widetilde{\Gamma}(m,l)$. It then follows from Lemma \ref{mps1} that
		$$
		\max_{0 \leq \tau \leq 1} F(\widetilde{g}_n(\tau)) \leq \widetilde{\gamma}(m,l) + \frac 1 n.
		$$
		Therefore, by Lemma \ref{mps2}, there exists a sequence $\{(u_n, s_n)\} \subset S(m,l) \times \R$ such that
		\begin{itemize}
			\item[$(\textnormal{i})$] $\gamma(m,l)-\frac 1n \leq F(u_{n}, s_{n}) \leq \gamma(m,l)+ \frac 1 n$;
			\item[$(\textnormal{ii})$] $\min_{0 \leq \tau \leq 1} \left\|(u_n, s_n)-\widetilde{g}_n(\tau)\right\|_{\mathcal{H}^1 \times \R} \leq \sqrt{\frac 1n}$;
			\item[$(\textnormal{iii})$] $\left|\langle F'(u_n, s_n), z \rangle \right| \leq 2 \sqrt{\frac 1 n} \|z\|_{\mathcal{H}^1 \times \R}$ for any $z \in \widetilde{T}_{(u_n, s_n)}$, where
			$$
			\widetilde{T}_{(u_n, s_n)}:=\left\{(u, s) \in {\mathcal{H}^1 \times \R} : \langle u_n, u \rangle =0, \langle L_z u_n, u \rangle =0 \right\}.
			$$
		\end{itemize}
		Define $v_n:=s_n \ast u_n$. Let us show that $\{v_n\} \subset S(m,l)$ satisfies \eqref{pss}. It first follows from 
		$(\textnormal{i})$ that 
		$$
		E(v_n)=\gamma(m,l)+o_n(1).
		$$ 
		Let $z=(0, 0)$. It is clear that $z \in \widetilde{T}_{(u_n, s_n)}$. This readily implies from $(\textnormal{iii})$ that $P(v_n)=o_n(1)$. Next we are going to verify that $E'\mid_{S(m, l)}(v_n)=o_n(1)$. Define
		$$
		T_{v_n}S(m,l):=\left\{u \in \mathcal{H}^1 : \langle v_n, u \rangle_2 =0, \langle L_z v_n, u \rangle_2=0 \right\}.
		$$
		Let $\eta \in T_{v_n}S(m, l)$. Observe that
		\begin{align} \label{de}
			\langle E'(v_n), \eta \rangle =\langle F'(u_n, s_n), (\widetilde{\eta}_n, 0) \rangle,
		\end{align}
		where $\widetilde{\eta}_n:=(-s_n) \ast \eta$. Simple calculations lead to
		$$
		\int_{\R^d} u_n \overline{\widetilde{\eta}_n} \,dx=\int_{\R^d} v_n \overline{\eta} \,dx=0, \quad \int_{\R^d} L_z u_n \overline{\widetilde{\eta}_n} \,dx=\int_{\R^d} L_z v_n \overline{\eta} \,dx=0.
		$$
		This implies that $(\widetilde{\eta}_n, 0) \in \widetilde{T}_{(u_n, s_n)}$, because of $\eta \in T_{v_n}S(m, l)$. It follows from $(\textnormal{ii})$ that
		\begin{align} \label{sn}
			|s_n| \leq \min_{0 \leq \tau \leq 1} \|(u_n, s_n)-(g_n(\tau), 0)\|_{\mathcal{H}^1 \times \R} \leq \frac{1}{\sqrt{n}}.
		\end{align}
		Therefore, by \eqref{sn}, we have that, for any $n \in \N^+$ large enough,
		$$
		\|(\widetilde{\eta}_n, 0)\|_{\mathcal{H}^1 \times \R}^2 \lesssim \int_{\R^d} |\eta|^2 \,dx + e^{-2s_n} \int_{\R^d} |\nabla \eta|^2 \,dx + \int_{\R^d} V(e^{s_n} x) |\eta|^2 \,dx \lesssim \|\eta\|^2_{\mathcal{H}^1}.
		$$
		As a consequence, by \eqref{de} and $(\textnormal{iii})$, it holds that
		$$
		\left|\langle E'(v_n), \eta \rangle\right| \lesssim \frac{1}{\sqrt{n}} \|\eta\|_{\mathcal{H}^1}.
		$$
		This completes the proof.
	\end{proof}
	
	\begin{thm} \label{thm2}
		There exists $m_0>0$ such that, for any $0<m<m_0$, \eqref{equ} satisfying \eqref{c} has a solution $v_{m ,l} \in S(m,l)$ at the level $\gamma(m,l)$. 
	\end{thm}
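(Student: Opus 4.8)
The plan is to obtain the mountain pass solution $v_{m,l}$ as the (subsequential) limit of the special Palais--Smale sequence $\{v_n\} \subset S(m,l)$ furnished by Lemma \ref{ps}, which carries the additional Pohozaev information $P(v_n) = o_n(1)$. The first step is to establish that $\{v_n\}$ is bounded in $\mathcal{H}^1$. As flagged in the introduction, this is delicate for a generic Palais--Smale sequence, but it becomes accessible precisely because of the extra condition $P(v_n) = o_n(1)$. Since the nonlinear contributions to $E$ and $P$ cancel in the right proportion,
\[
E(v_n) - \frac{1}{\sigma d} P(v_n) = \left(\frac12 - \frac{1}{\sigma d}\right)\|\nabla v_n\|_2^2 + \int_{\R^d} V(x)|v_n|^2\,dx + \frac{1}{\sigma d}\int_{\R^d}\langle \nabla V(x), x\rangle |v_n|^2\,dx.
\]
Because $\sigma > 2/d$ gives $\frac12 - \frac{1}{\sigma d} > 0$, and because $V \geq 0$ together with \eqref{v} makes the last two integrals nonnegative, the right-hand side is bounded below by a positive multiple of $\|\nabla v_n\|_2^2 + \|V^{1/2}v_n\|_2^2$, while the left-hand side is bounded by $E(v_n) = \gamma(m,l) + o_n(1)$ and $P(v_n) = o_n(1)$. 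Together with $\|v_n\|_2^2 = m$, this yields a uniform bound on $\|v_n\|_{\mathcal{H}^1}$.

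Next, passing to a subsequence, $v_n \wto v_{m,l}$ in $\mathcal{H}^1$. Applying Lemma \ref{cembedding} with exponent $2\sigma+2$ and, taking $\sigma = 0$, with exponent $2$, I obtain $v_n \to v_{m,l}$ strongly in $L^{2\sigma+2}$ and in $L^2$, so $M(v_{m,l}) = m$ and the nonlinear terms pass to the limit. The subtle constraint is the angular momentum, since $L(\cdot)$ is not weakly continuous on $\mathcal{H}^1$. Here the super-quadratic confinement is essential: as $V(x) \gtrsim |x|^k$ with $k > 2$, for every $\eps > 0$ there is $R'$ with $|x|^2 \leq \eps V(x)$ on $\{|x| > R'\}$; splitting $\int_{\R^d} |x|^2 |v_n - v_{m,l}|^2\,dx$ over $\{|x| \leq R'\}$ and $\{|x| > R'\}$ and using strong $L^2$ convergence on the first region together with the uniform bound on $\int_{\R^d} V|v_n|^2\,dx$ on the second shows $x_j v_n \to x_j v_{m,l}$ strongly in $L^2$. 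Coupling this with $\nabla v_n \wto \nabla v_{m,l}$ in $L^2$ lets me pass to the limit in $L(v_n) = -\textnormal{i}\int_{\R^d}\overline{v_n}(x_1\partial_{x_2}v_n - x_2\partial_{x_1}v_n)\,dx$, along the lines of \cite[Proposition 2.4]{NSS}, giving $L(v_{m,l}) = l$. Hence $v_{m,l} \in S(m,l)$.

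Finally, I would upgrade to strong convergence and identify the equation. From $E'\mid_{S(m,l)}(v_n) = o_n(1)$ there are Lagrange multipliers $\omega_n, \Omega_n$ with $E'(v_n) - \omega_n v_n - \Omega_n L_z v_n \to 0$ in $(\mathcal{H}^1)^*$; testing against $v_n$ and $L_z v_n$ and using the $\mathcal{H}^1$ bound, the constraint values $M(v_n) = m$, $L(v_n) = l$, and the nondegeneracy of the constraints (Corollary \ref{thm5}) show $\{\omega_n\}, \{\Omega_n\}$ are bounded; along a subsequence $\omega_n \to \omega$, $\Omega_n \to \Omega$, and $v_{m,l}$ solves \eqref{equ} and \eqref{c}. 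Testing the multiplier relation against $v_n - v_{m,l}$, subtracting the limiting equation, and invoking the strong $L^2$, $L^{2\sigma+2}$ and weighted convergences to kill the lower-order terms forces $\|\nabla(v_n - v_{m,l})\|_2^2 + 2\|V^{1/2}(v_n - v_{m,l})\|_2^2 \to 0$, i.e. $v_n \to v_{m,l}$ in $\mathcal{H}^1$. Consequently $E(v_{m,l}) = \lim_n E(v_n) = \gamma(m,l)$, so $v_{m,l}$ is the desired mountain pass solution. I expect the \emph{main obstacle} to be the compactness: both the boundedness of $\{v_n\}$, which hinges on the Pohozaev control supplied by Lemma \ref{ps}, and the convergence of the angular momentum functional, which fails without the weighted $L^2$-compactness afforded by the super-quadratic growth of $V$.
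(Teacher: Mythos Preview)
Your approach is essentially the same as the paper's: obtain the special Palais--Smale sequence from Lemma \ref{ps}, use the identity $E(v_n)-\tfrac{1}{\sigma d}P(v_n)$ to get boundedness in $\mathcal{H}^1$, pass to a weak limit, use Lemma \ref{cembedding} for $L^p$-compactness, recover $L(v_{m,l})=l$ via the super-quadratic confinement as in \cite[Proposition 2.4]{NSS}, extract Lagrange multipliers, and upgrade to strong convergence so that $E(v_{m,l})=\gamma(m,l)$.

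One small slip: you invoke Corollary \ref{thm5} to justify that the multipliers $\omega_n,\Omega_n$ are bounded, but that corollary concerns regularity of \emph{local minimizers} to \eqref{lmin1}, not of the Palais--Smale sequence $\{v_n\}$ (which is not known to consist of minimizers). The paper avoids this by an explicit dichotomy: if $G_1'(v_n)$ and $G_2'(v_n)$ are linearly independent one builds projectors $e_{1,n},e_{2,n}$ with $\langle G_i'(v_n),e_{j,n}\rangle=\delta_{ij}$ and sets $\omega_n=\langle E'(v_n),e_{1,n}\rangle$, $\Omega_n=\langle E'(v_n),e_{2,n}\rangle$; if they are linearly dependent then the tangent space is determined by $G_1'$ alone and one may simply take $\Omega_n=0$. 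Either branch yields bounded multipliers directly from the $\mathcal{H}^1$-bound on $\{v_n\}$, with no appeal to regularity of minimizers. With that correction your argument matches the paper's.
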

	\begin{proof}
		It follows from Lemma \ref{ps} that there exists a sequence $\{v_n\} \subset S(m, l)$ be such that
		$$
		E(v_n)=\gamma(m, l)+o_n(1), \quad E'\mid_{S(m,l)}(v_n)=o_n(1), \quad P(v_n)=o_n(1).
		$$
		Observe that
		\begin{align*}
			\gamma(m, l)+o_n(1)&=E(v_n)-\frac{1}{\sigma d} P(v_n) \\
			&= \left(\frac 12 -\frac{1}{\sigma d}\right) \int_{\R^d} |\nabla v_n|^2 \,dx+\frac{1}{\sigma d} \int_{\R^d} \langle \nabla V(x), x \rangle |v_n|^2 \,dx + \int_{\R^d}V(x) |v_n|^2 \,dx.
		\end{align*}
		Thanks to $\sigma d>2$ and \eqref{v}, we then obtain that $\{v_n\}$ is bounded in $\mathcal{H}^1$. This implies that there exists $v_{m,l} \in \mathcal{H}^1$ such that $v_n \wto v_{m,l}$ in $\mathcal{H}^1$ as $n \to \infty$. {By Lemma \ref{cembedding}, we have that $v_n \to v_{m,l}$ in $L^{2+2\sigma}(\R^d)$ as $n \to \infty$ for any $0 \leq \sigma<\frac{2}{(d-2)^+}$}. Let $h \in \mathcal{H}^1$. If $G_1'(v_n)$ and $G_2'(v_n)$ are linearly independent, then we define
		\begin{align} \label{defetan}
		\eta_n:=h-\langle G_1'(v_n), h \rangle e_{1,n} -\langle G_2'(v_n), h \rangle e_{2,n},
		\end{align}
		where $\{e_{i,n}\} \subset \mathcal{H}^1$ is bounded and $\langle G_i'(v_n), e_{j, n}\rangle=\delta_{ij}$ for $i,j=1,2$. {It is straightforward to find that $\langle G_i'(v_n), \eta_n \rangle=0$ for $i=1,2$}.
		It then implies that $\{\eta_n\} \subset T_{v_n}S(m, l)$. {Observe that $\{\eta_n\} \subset \mathcal{H}^1$ is bounded by \eqref{defetan} and the fact that $\{v_n\} \subset \mathcal{H}^1$ is bounded. Since $E'\mid_{S(m,l)}(v_n)=o_n(1)$, then}
		$$
		\langle E'(v_n), \eta_n \rangle=o_n(1).
		$$
		Define 
		\begin{align} \label{deflm}
		\omega_n:=\langle E'(v_n), e_{1,n} \rangle, \quad \Omega_n:=\langle E'(v_n), e_{2,n} \rangle.
		\end{align}
		Therefore, we get that
		\begin{align} \label{equvn}
			-\frac 12 \Delta v_n + V(x) v_n-\omega_n v_n-\Omega_n L_z v_n=\lambda |v_n|^{2\sigma} v_n +o_n(1).
		\end{align}
		Since $\{v_n\} \subset \mathcal{H}^1$ and $\{e_{i,n}\} \subset \mathcal{H}^1$ are bounded, {then $\{\omega_n\}, \{\Omega_n\} \subset \R$ are bounded by \eqref{deflm}}. It then follows that there exist $\omega, \Omega \in \R$ such that $\omega_n \to \omega$ and $\Omega_n \to \Omega$ in $\R$ as $n \to \infty$. {Since $v_n \wto v_{m,l}$ in $\mathcal{H}^1$ as $n \to \infty$, by \eqref{equvn}, then $v_{m,l} \in \mathcal{H}^1$ solves the equation}
		\begin{align} \label{equv}
			-\frac 12 \Delta v_{m,l} + V(x) v_{m,l}-\omega v_{m,l}-\Omega L_z v_{m,l}=\lambda |v_{m,l}|^{2\sigma} v_{m,l}.
		\end{align}
		If $G_1'(v_n)$ and $G_2'(v_n)$ are linearly dependent, then
		$$
		T_{v_n}S(m,l)=\left\{u \in \mathcal{H}^1 : {\langle G_1'(v_n), u \rangle_2 }= 0 \right\}.
		$$
		In this case, we define
		\begin{align} \label{defetn1}
		\eta_n:=h-\langle G_1'(v_n), h \rangle e_{1,n},
		\end{align}
		where $\{e_{i,n}\} \subset \mathcal{H}^1$ is bounded and $\langle G_1'(v_n), e_{1, n}\rangle=1$.  {It follows from \eqref{defetn1} that $\langle G_1'(v_n), \eta_n \rangle= 0$. This infers that $\{\eta_n\} \subset T_{v_n}S(m,l)$. In addition, by \eqref{defetn1}, one has that $\{\eta_n\} \subset \mathcal{H}^1$ is bounded}. Define
		$$
		\omega_n:=\langle E'(v_n), e_{1,n} \rangle, \quad \Omega_n:=0.
		$$
		Thereby, we conclude that $\{v_n\} \subset \mathcal{H}^1$ satisfies \eqref{equvn} with $\Omega_n=0$. {Since $v_n \wto v_{m,l}$ in $\mathcal{H}^1$ as $n \to \infty$, then $v_{m,l} \in \mathcal{H}^1$ solves \eqref{equv} with $\Omega=0$}. {We now demonstrate that $v_{m,l} \in S(m,l)$. It is simple to obtain that $G_1(v_{m,l})=0$}, due to $v_n \to v_{m,l}$ in $L^2(\R^d)$ as $n \to \infty$. Moreover, using the similar spirit as the proof of Theorem \ref{thm1}, we are also able to show that $G_2(v_{m,l})=0$, i.e. $v_{m,l} \in S(m,l)$. {Testing \eqref{equvn} and \eqref{equv} by $v_n$ and $v_{m,l}$ respectively and using Lemma \ref{cembedding}, we then obtain that
		$$
		\int_{\R^d} \frac 12 |\nabla v_n|^2 +V(x) |v_n|^2 \,dx=\int_{\R^d} \frac 12 |\nabla v_{m,l}|^2  \,dx +V(x) |v_{m,l}|^2 \,dx +o_n(1).
		$$
		Since $E(v_n)=\gamma(m, l)+o_n(1)$, by applying again Lemma \ref{cembedding}, then we have that $E(v_{m,l})=\gamma(m,l)$.
		}
		This completes the proof.
	\end{proof}
	
	\begin{proof}[Proof of Theorem \ref{existence}] 
		Using Theorems \ref{thm1} and \ref{thm2} along with Lemma \ref{mps}, we then have the desired conclusion and the proof is complete.
	\end{proof}
	
	\section{Stability and Instability of Solutions} \label{dynamics}
	
	In this section, we are going to discuss orbital stability and instability of solutions to \eqref{equ} and \eqref{c} obtained in Theorem \ref{existence}. First we shall present orbital stability of the set of local minimizers. 
	
	
	\begin{proof}[Proof of Theorem \ref{stable}]
		Define
		$$
		\mathcal{B}(m,l):=\left\{ u\in S(m,l) : E(u)=e(m,l)\right\}.
		$$
		Suppose by contradiction that there exist $\eps_0>0$, $\{u_{0,n}\} \subset \mathcal{H}^1$, $u_0 \in \mathcal{B}(m,l)$ and $\{t_n\} \subset \R$ such that
		\begin{align} \label{stable1}
			\lim_{n \to \infty} \|u_{0,n}-u_0\|_{\mathcal{H}^1}=0,
		\end{align}
		\begin{align} \label{stable2}
			\inf_{u \in \mathcal{B}(m,l)} \|v_n(t_n, \cdot)-u\|_{\mathcal{H}^1} \geq \eps_0,
		\end{align}
		where $v_n \in C((-T_{max}, T_{max}), \mathcal{H}^1)$ is the unique solution to \eqref{equt} with initial data $u_{0,n} \in \mathcal{H}^1$. Moreover, we have that $\{v_{n}(t_n, \cdot)\} \subset B(r)$ for any $t_{n} \in (-T_{max}, T_{max})$. If not, by the continuity, then there exists $0<s_n <t_n$ such that $\{v_{n}(s_n, \cdot)\} \subset \partial B(r)$, because $u_{0,n} \subset B(r)$. With the help of the conservation laws in Lemma \ref{lwp}, Lemma \ref{mps0} and Theorem \ref{thm1}, we then derive that
		\begin{align*}
			E(u_{0,n})=E(v_{n}(s_n, \cdot)) \geq \inf_{u \in B(r) \backslash B(r/2)}E(u)>\inf_{u \in B(r/16)}E(u)\geq e(m,l).
		\end{align*}
		Then we reach a contradiction from the assumption. Therefore, we conclude that $T_{max}=+\infty$, i.e. the solutions exist globally in time. In view of \eqref{stable1}, we know that
		$$
		\lim_{n \to \infty}E(u_{0,n})=E(u_0), \quad \lim_{n \to \infty} M(u_{0,n})=M(u_0).
		$$
		Further, arguing as the proof of Theorem \ref{thm1}, we can get that
		$$
		\lim_{n \to \infty} L(u_{0,n})=L(u_0).
		$$
		Invoking the conservation laws in Lemma \ref{lwp} and the conservation of the angular momentum, we then find that
		$$
		\lim_{n \to \infty}E(v_{n}(t_n, \cdot))=E(u_0), \quad \lim_{n \to \infty} M(v_{n}(t_n, \cdot))=M(u_0), \quad \lim_{n \to \infty} L(v_{n}(t_n, \cdot))=L(u_0).
		$$
		In a similar way as the proof of Theorem \ref{thm1}, by Lemma \ref{cembedding}, we are able to show that $\{v_{n}(t_n, \cdot)\}$ is compact in $\mathcal{H}^1$,
		which clearly contradicts \eqref{stable2}. 
		It then follows that $\mathcal{B}(m,l)$ is orbitally stable and the desired conclusion follows. This completes the proof. 
	\end{proof}
	
	Next we shall investigate orbital instability of mountain type solutions to \eqref{equ} and \eqref{c}. For this, we first present an alternative variational characterization of the mountain pass energy level $\gamma(m,l)$, see for example \cite{BJ, Gou, S}. In what follows, we shall assume that the assumptions \eqref{v1} and \eqref{v2} hold.
	
	\begin{lem} \label{maxi}
		There exists $m_0>0$ such that, for any $0<m<m_0$ and $u \in S(m,l)$, the function $s \mapsto E(u_s)$ has exactly two critical points $0<s_{u, 1}<s_{u,2}$ on $(0, +\infty)$ such that $u_{s_{u, 1}} \in \mathcal{P}^+(m,l)$ and $u_{s_{u, 2}} \in \mathcal{P}^-(m,l)$, where 
		$$
		\mathcal{P}^+(m,l):=\left\{u \in S(m,l) : P(u)=0, Q(u)>0\right\}, 
		$$
		$$
		\mathcal{P}^-(m,l):=\left\{u \in S(m,l) : P(u)=0, Q(u)<0\right\},
		$$ 
		$$
		Q(u):=\int_{\R^d} |\nabla u|^2+2\langle \nabla V(x), x \rangle +\sum_{i,j=1}^d\partial_{ij}V(x)|u|^2 -\frac{\lambda \sigma d (\sigma d-1)}{\sigma+1} |u|^{2 \sigma +2}\,dx{=\frac{d^2}{ds^2} E(u_s) \mid_{s=1}}.
		$$
		Moreover, the function $s \mapsto E(u_s)$ is concave on $[s_{u, 2}, + \infty)$.
	\end{lem}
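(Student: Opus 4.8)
The plan is to study the fibering map $\varphi(s):=E(u_s)$ on $(0,+\infty)$ and to read off the critical points of $s\mapsto E(u_s)$ from the shape of $\varphi$. Since the scaling \eqref{scaling} preserves both the mass and the angular momentum, $u_s\in S(m,l)$ for every $s>0$, and a change of variables turning $V(x)$ into $V(x/s)$ gives
\[
\varphi(s)=\frac{s^2}{2}\|\nabla u\|_2^2+\int_{\R^d}V\!\left(\frac xs\right)|u|^2\,dx-\frac{\lambda s^{\sigma d}}{\sigma+1}\|u\|_{2\sigma+2}^{2\sigma+2}.
\]
Differentiating under the integral sign one checks the two identities $s\varphi'(s)=P(u_s)$ and $s^2\varphi''(s)=Q(u_s)$, so the critical points of $\varphi$ are precisely the scalings $u_s$ lying on the Pohozaev manifold $\mathcal P(m,l)$, and the sign of $Q(u_s)$ coincides with that of $\varphi''(s)$. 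In particular, the splitting into $\mathcal P^+(m,l)$ and $\mathcal P^-(m,l)$ is nothing but the distinction between local minima and local maxima of $\varphi$.

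The heart of the argument is to show that $\varphi$ is first strictly convex and then strictly concave, with a single inflection point $\bar s$. To this end I would rewrite the second derivative as
\[
\varphi''(s)=\|\nabla u\|_2^2+\frac{1}{s^3}\int_{\R^d}\Phi(s,x)|u|^2\,dx-\frac{\lambda\sigma d(\sigma d-1)}{\sigma+1}s^{\sigma d-2}\|u\|_{2\sigma+2}^{2\sigma+2},
\]
where $\Phi(s,x):=2\langle\nabla V(x/s),x\rangle+\frac1s\sum_{i,j=1}^d\partial_{ij}V(x/s)x_ix_j$ is exactly the quantity appearing in \eqref{v2}. By \eqref{v} and \eqref{v1} one has $\Phi(s,x)>0$, and by \eqref{v2} the map $s\mapsto\Phi(s,x)$ is nonincreasing; multiplying by the positive decreasing factor $s^{-3}$ shows that $s\mapsto s^{-3}\int_{\R^d}\Phi(s,\cdot)|u|^2$ is strictly decreasing. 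Since $\sigma d>2$, the last term is strictly decreasing as well, and the first is constant, so $\varphi''$ is strictly decreasing on $(0,+\infty)$. Moreover $\varphi''(s)\to+\infty$ as $s\to0^+$ (the confinement term blows up) and $\varphi''(s)\to-\infty$ as $s\to+\infty$ (the focusing term dominates because $\sigma d>2$). Hence $\varphi''$ has a unique zero $\bar s$, so $\varphi$ is strictly convex on $(0,\bar s)$ and strictly concave on $(\bar s,+\infty)$; in particular $s\mapsto E(u_s)$ is concave on $[\bar s,+\infty)\supset[s_{u,2},+\infty)$. Consequently $\varphi'$ is unimodal, increasing on $(0,\bar s)$ and decreasing afterwards, while $\varphi'(s)\to-\infty$ at both endpoints (again the confinement term near $0$, and $\sigma d-1>1$ near $+\infty$).

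With this shape in hand, $\varphi$ has exactly two critical points $0<s_{u,1}<\bar s<s_{u,2}$ precisely when $\max_{s>0}\varphi'(s)=\varphi'(\bar s)>0$; the first is then a strict local minimum ($\varphi''>0$, hence $Q(u_{s_{u,1}})>0$ and $u_{s_{u,1}}\in\mathcal P^+(m,l)$) and the second a strict local maximum ($\varphi''<0$, hence $u_{s_{u,2}}\in\mathcal P^-(m,l)$). It remains to prove this strict positivity, and this is where the smallness of $m$ enters. I would estimate $\varphi'$ at a well-chosen scale: the confinement gradient term $s^{-2}\int_{\R^d}\langle\nabla V(x/s),x\rangle|u|^2$, positive by \eqref{v}, is compared with the kinetic term $s\|\nabla u\|_2^2$ after splitting the region $|x/s|\lessgtr R$ as in \eqref{lu}, while \eqref{gn} bounds $\|u\|_{2\sigma+2}^{2\sigma+2}\le C\|\nabla u\|_2^{\sigma d}m^{\sigma(2-d)/2+1}$, so that for $m<m_0$ the defocusing kinetic contribution beats the focusing nonlinear one over an interval of scales and forces $\varphi'>0$ there.

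I expect this last step to be the main obstacle. Unlike the convex/concave dichotomy, which is an \emph{unconditional} consequence of \eqref{v}--\eqref{v2} together with $\sigma d>2$, the positivity $\varphi'(\bar s)>0$ is genuinely quantitative and must be made uniform over all $u\in S(m,l)$; this is delicate because the confinement integrals are not controlled by $\|\nabla u\|_2$ and $m$ alone. The threshold $m_0$ produced by \eqref{gn} is exactly what renders the estimate uniform, in the same spirit as the barrier furnished by Lemma \ref{mps0}.
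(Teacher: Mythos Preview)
Your proposal is correct and follows essentially the same approach as the paper: analyze the fibering map $\varphi(s)=E(u_s)$, show via \eqref{v2} and $\sigma d>2$ that $\varphi''$ is strictly decreasing with a unique zero, record the endpoint asymptotics of $\varphi$, $\varphi'$, $\varphi''$, and then use the small-mass barrier of Lemma~\ref{mps0} to rule out $\max_{s>0}\varphi'(s)\le 0$. The only difference is in this last step you flag as delicate: the paper argues by contradiction---a nonincreasing $\varphi$ would violate \eqref{below1}--\eqref{below2}---rather than by a direct estimate of $\varphi'$ at a chosen scale, and this indirect route neatly absorbs the uniformity-in-$u$ concern since the barrier is already stated uniformly over $S(m,l)$.
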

	\begin{proof}
		Observe first that
		$$
		E(u_s)=\int_{\R^d} \frac {s^2}{2} |\nabla u|^2 + V\left(\frac{x}{s}\right) |u|^2 - \frac{\lambda s^{\sigma d}}{\sigma+1} |u|^{2 \sigma +2} \,dx.
		$$
		It is straightforward to compute that
		\begin{align*}
			\frac{d}{ds} E(u_s)=\int_{\R^d} s |\nabla u|^2 - \frac{1}{s^2}\langle \nabla V\left(\frac{x}{s}\right), x \rangle  |u|^2 - \frac{\lambda \sigma d s^{\sigma d-1}}{\sigma+1} |u|^{2 \sigma +2} \,dx=\frac 1 s P(u_s),
		\end{align*}
		\begin{align*}
			\frac{d^2}{ds^2} E(u_s)&=\int_{\R^d} |\nabla u|^2+\frac{1}{s^3}\left(2\langle \nabla V\left(\frac{x}{s}\right), x \rangle +\frac{1}{s} \sum_{i,j=1}^d\partial_{ij}V\left(\frac{x}{s}\right)x_ix_j \right)|u|^2 \\
			& \quad -\frac{\lambda \sigma d (\sigma d-1)s^{\sigma d-2}}{\sigma+1} |u|^{2 \sigma +2}\,dx.
		\end{align*}
		It then follows from \eqref{v2} that the function $s \mapsto \frac{d^2}{ds^2} E(u_s)$ is nonincreasing on $(0, +\infty)$, because of $\sigma d>2$. In addition, from \eqref{v2}, it holds that 
		$$
		\lim_{s \to 0}\frac{d^2}{ds^2} E(u_s)=+\infty, \quad \lim_{s \to + \infty}\frac{d^2}{ds^2} E(u_s)=-\infty.
		$$
		This readily implies the function $s \mapsto \frac{d^2}{ds^2} E(u_s)$ has exactly one zero in $(0, +\infty)$. Moreover, we are able to conclude from \eqref{v} and \eqref{v1} that
		$$
		\lim_{s \to 0}\frac{d}{ds} E(u_s)=-\infty, \quad \lim_{s \to + \infty}\frac{d}{ds} E(u_s)=-\infty.
		$$
		Observe that $c_0 (1+|x|^2)^{k/2} \leq V(x) \leq C_0 (1+|x|^2)^{k/2}$ for $|x|>R$ by Assumption \ref{a1}. Consequently, we find that 
		$$
		\lim_{s \to 0}E(u_s)=+\infty, \quad \lim_{s \to + \infty}E(u_s)=-\infty.
		$$
		If 
		$$
		\max_{s>0} \frac{d}{ds} E(u_s) \leq 0,
		$$ 
		then the function $s \mapsto E(u_s)$ is nonincreasing on $(0, +\infty)$. However, applying \eqref{below}, we know that, for any $r>0$, there exists $m_0>0$ such that \eqref{below1} and \eqref{below2} hold.
		This then implies that 
		$$
		\max_{s>0} \frac{d}{ds} E(u_s) > 0.
		$$ 
		Therefore, the function $ s \mapsto \frac{d}{ds} E(u_s) $ has exactly two zeros on $(0, +\infty)$. At this point, we can conclude the proof.
	\end{proof}
	
	\begin{lem} \label{vc}
		There exists $m_0>0$ such that, for any $0<m<m_0$, it holds
		$$
		\gamma(m,l)=\inf_{u \in \mathcal{P}^-(m,l)} E(u).
		$$
	\end{lem}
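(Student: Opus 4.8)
The plan is to prove the two inequalities $\gamma(m,l)\ge\inf_{u\in\mathcal{P}^-(m,l)}E(u)$ and $\gamma(m,l)\le\inf_{u\in\mathcal{P}^-(m,l)}E(u)$ separately, exploiting throughout the fibering structure of $s\mapsto E(u_s)$ recorded in Lemma \ref{maxi}. For each $u\in S(m,l)$ that lemma produces a unique location $s_{u,2}$ of the (nondegenerate, since $Q<0$ there) local maximum of $s\mapsto E(u_s)$, with $u_{s_{u,2}}\in\mathcal{P}^-(m,l)$, and membership $u\in\mathcal{P}^-(m,l)$ is equivalent to $s_{u,2}=1$. Since this maximum is strict and $\frac{d^2}{ds^2}E(u_s)\ne 0$ there, the implicit function theorem shows that $u\mapsto s_{u,2}$ is continuous on $S(m,l)$ in the $\mathcal{H}^1$ topology; I would establish this continuity first, as it underlies the lower bound.

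For the lower bound $\gamma\ge\inf_{\mathcal{P}^-}E$, I would show that every $g\in\Gamma(m,l)$ meets $\mathcal{P}^-(m,l)$. At the endpoints, $g(0)=u_{m,l}$ is the interior local minimizer of Theorem \ref{thm1}, so $s=1$ is the first critical point of its scaling and $s_{u_{m,l},2}>1$; while $g(1)=(u_{m,l})_{s_0}$ has scaling family $(u_{m,l})_{s_0 t}$ whose maximum sits at $t=s_{u_{m,l},2}/s_0<1$, because $s_0$ is chosen past the barrier (indeed $E((u_{m,l})_{s_0})<0$, whereas $\mathcal{P}^-$ points have positive energy). Thus $\tau\mapsto s_{g(\tau),2}$ is continuous, exceeds $1$ at $\tau=0$ and lies below $1$ at $\tau=1$, so the intermediate value theorem yields $\tau^\ast$ with $s_{g(\tau^\ast),2}=1$, i.e. $g(\tau^\ast)\in\mathcal{P}^-(m,l)$. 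Then $\max_\tau E(g(\tau))\ge E(g(\tau^\ast))\ge\inf_{\mathcal{P}^-}E$, and taking the infimum over $g$ gives the claim.

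For the upper bound $\gamma\le\inf_{\mathcal{P}^-}E$, it suffices to produce, for each $u\in\mathcal{P}^-(m,l)$, an admissible path $g\in\Gamma(m,l)$ with $\max_\tau E(g(\tau))\le E(u)$. The backbone is the scaling curve $s\mapsto u_s$: since $u\in\mathcal{P}^-$ forces $s_{u,2}=1$, Lemma \ref{maxi} gives $E(u_s)\le E(u)$ for all $s\ge s_{u,1}$, with $E(u_s)\to-\infty$ as $s\to+\infty$ and $u_{s_{u,1}}\in\mathcal{P}^+(m,l)$. I would splice this central arc with two connecting arcs respecting the fixed endpoints of $\Gamma(m,l)$: on the far side, one joins $u_S$ (for $S$ large, where $E<0$) to $(u_{m,l})_{s_0}$ by scaling up a fixed horizontal path from $u$ to $u_{m,l}$ by a large factor, which by compactness keeps the energy negative throughout; on the near side, one joins $u_{m,l}$ to the $\mathcal{P}^+$ point $u_{s_{u,1}}$ inside the valley $B(r/2)$, using Lemma \ref{mps0} and Theorem \ref{thm1} to keep the energy along this arc below the mountain-pass level, hence below $E(u)$.

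I expect the main obstacle to be precisely this last step: building the two connecting arcs while keeping the energy no larger than $E(u)$, since the endpoints of $\Gamma(m,l)$ are rigidly prescribed as $u_{m,l}$ and $(u_{m,l})_{s_0}$ rather than as scalings of $u$. The near-side connection is the delicate one; it requires that all $\mathcal{P}^+$ points lie in the valley $B(r/2)$ and that this valley, intersected with a suitable sublevel set, is path-connected, so that $u_{m,l}$ and $u_{s_{u,1}}$ can be joined below the barrier. This is the same mechanism underlying the analogous characterizations in \cite{BJ, Gou, S}; once the two arcs are in place, the concatenated path is admissible and its maximum equals $E(u)$, and minimizing over $u\in\mathcal{P}^-(m,l)$ yields $\gamma\le\inf_{\mathcal{P}^-}E$, completing the proof.
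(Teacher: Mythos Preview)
Your lower bound argument coincides with the paper's: the paper works on $\widetilde{\Gamma}(m,l)$ via Lemma~\ref{mps1} rather than directly on $\Gamma(m,l)$, but the intermediate-value step on $\tau\mapsto s_{h_2(\tau)\ast h_1(\tau),2}$ is exactly your $\tau\mapsto s_{g(\tau),2}$ argument. For the upper bound, however, the paper does \emph{not} carry out the splicing you outline. Given $u\in\mathcal{P}^-(m,l)$ it simply takes the scaling arc $\widetilde g(\tau)=(u_{(1-\tau)s_{u,1}+\tau s_0},0)$ and asserts $E(u)=\max_\tau F(\widetilde g(\tau))\ge\widetilde{\gamma}(m,l)$. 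This arc has endpoints $(u_{s_{u,1}},0)$ and $(u_{s_0},0)$, not the prescribed $(u_{m,l},0)$ and $((u_{m,l})_{s_0},0)$, so strictly speaking it does not belong to $\widetilde{\Gamma}(m,l)$; the paper glosses over exactly the obstacle you flag as the main one. Your two-sided splicing plan is the standard repair and is what the references \cite{BJ, Gou, S} effectively implement, often by defining the path class with floating endpoints (one in the basin, one in $\{E<0\}$), which leaves Lemmas~\ref{mps}--\ref{ps} unchanged and renders the upper bound immediate. If you keep the fixed-endpoint definition, the far-side connection works as you describe; for the near side you still owe the verification that $u_{s_{u,1}}$ lands in the basin and that the basin is path-connected below the barrier---this is genuine technical content that your proposal rightly isolates, and that the paper's shortcut does not address.
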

	\begin{proof}
		Let $\widetilde{g} \in \widetilde{\Gamma}(m,l)$. Denote by $\widetilde{g}(\tau):=(h_1(\tau), h_2(\tau))$ for $0 \leq \tau \leq 1$ with $h_2(0)=h_2(1)=0$. {Since $u_{m,l} \in S(m,l)$ is a solution to \eqref{equ} and \eqref{c}, then $P(u_{m,l})=0$. Moreover, in the spirit of the proof of Lemma \ref{maxi}, we know that
		$$
	       Q(u_{m,l})=\frac{d^2}{ds^2} E((u_{m,l})_s) \mid_{s=1}>0.
		$$
		It then follow that $u_{m,l} \in \mathcal{P}^+(m,l)$.} Since $\widetilde{g}(0)=(u_{m,l}, 0)$, by Lemma \ref{maxi}, then $s_{h_2(0) \ast h_1(0), 1}=s_{h_1(0), 1}=1$ and $s_{h_2(0) \ast h_1(0), 2}=s_{h_1(0), 2}>1$. Since $\widetilde{g}(1)=((u_{m,l})_{s_0}, 0)$ and $E((u_{m,l})_{s_0})<0$, by Lemma \ref{maxi}, then $s_{h_2(1) \ast h_1(1), 2}=s_{h_1(1), 2}<1$. As a consequence, there exists $0<\widetilde{\tau}<1$ such that $s_{h_2(\widetilde{\tau}) \ast h_1(\widetilde{\tau}), 2}=1$. Therefore, it holds that
		\begin{align*} 
			\max_{0 \leq \tau \leq 1} F(\widetilde{g}(\tau)) \geq E(h_2(\widetilde{\tau}) \ast h_1(\widetilde{\tau})) \geq \inf_{u \in \mathcal{P}^-(m,l)} E(u).
		\end{align*}
		This implies that 
		\begin{align} \label{vc1}
			\widetilde{\gamma}(m,l) \geq \inf_{u \in \mathcal{P}^-(m,l)} E(u).
		\end{align}
		Let $ u \in \mathcal{P}^{-}(m,l)$. Invoking Lemma \ref{maxi}, we know that there exists $s_{u, 1}>0$ such that $u_{s_{u,1}} \in \mathcal{P}^+(m,l)$. Define a path $\widetilde{g}$ by
		$$
		\widetilde{g}(\tau):=(((1-\tau)s_{u,1} + \tau s_0))^{\frac d2 }u(((1-\tau)s_{u,1} + \tau s_0)x), 0), \quad 0 \leq \tau \leq 1,
		$$
		where $s_0>0$ such that $E(u_{s_0})<0$. It then follows that
		$$
		E(u)=F(u, 0)=\max_{0 \leq \tau \leq 1} F(\widetilde{g}(\tau)) \geq \widetilde{\gamma}(m,l),
		$$
		which implies that
		\begin{align*} 
			\inf_{u \in \mathcal{P}^-(m,l)} E(u) \geq \widetilde{\gamma}(m,l).
		\end{align*}
		This along with \eqref{vc1} and Lemma \ref{mps1} lead to the desired conclusion. Therefore, the proof is complete.
	\end{proof}
	The following lemma has been proved in \cite{AMS}. Here we give a explicit proof for reader's convenience.  
	\begin{lem} \label{virial}
		Let $u \in C([0, T_{max}), \mathcal{H}^1)$ be the solution to \eqref{equt} with initial data $u_0 \in \mathcal{H}^1$. Define 
		$$
		\mathcal{M}[u(t)]:=\int_{\R^d} |x|^2 |u(t, x)|^2 \,dx.
		$$
		Then it holds that
		\begin{align*}
			\frac{d^2}{dt^2}\mathcal{M}[u(t)]=2\int_{\R^d} |\nabla u|^2 \,dx-2 \int_{\R^d} \langle \nabla V(x), x \rangle |u|^2 \,dx-\frac{2d \sigma \lambda}{\sigma +1} \int_{\R^d} |u|^{2 \sigma +2} \,dx.
		\end{align*}
	\end{lem}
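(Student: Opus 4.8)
The plan is to write \eqref{equt} in the operator form $\textnormal{i}\partial_t u = \mathcal{H}u$, where $\mathcal{H} := -\frac12\Delta + V - \Omega L_z - \lambda|u|^{2\sigma}$ and, at each fixed time, the nonlinearity is just multiplication by the real-valued function $\lambda|u|^{2\sigma}$, so that $\mathcal{H}$ is (formally) self-adjoint on $L^2$. Differentiating $\mathcal{M}[u]=\langle u,|x|^2u\rangle_2$ once and using $\operatorname{Re}(-\textnormal{i}z)=\operatorname{Im}z$ gives $\frac{d}{dt}\mathcal{M}[u(t)]=\frac{1}{\textnormal{i}}\langle u,[|x|^2,\mathcal{H}]u\rangle_2$, so the evolution is governed entirely by the commutator $[|x|^2,\mathcal{H}]$. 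The multiplication operators $V$ and $\lambda|u|^{2\sigma}$ commute with $|x|^2$, and — this is the decisive point — so does $L_z$, because $L_z|x|^2=-\textnormal{i}(x_1\partial_{x_2}-x_2\partial_{x_1})|x|^2=0$ (equivalently, $|x|^2$ is radial while $L_z$ generates rotations, hence $[|x|^2,L_z]=0$). Only the Laplacian survives, and a direct computation gives $[|x|^2,-\frac12\Delta]=d+2x\cdot\nabla$, whence $\frac{d}{dt}\mathcal{M}[u(t)]=2\operatorname{Im}\int_{\R^d} x\cdot\overline{u}\,\nabla u\,dx$; in particular the rotation drops out of the first-moment identity completely.

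For the second derivative I would differentiate once more. By the same manipulation, and since $B:=[|x|^2,\mathcal{H}]=d+2x\cdot\nabla$ is a fixed operator, this reduces to the double commutator $\frac{d^2}{dt^2}\mathcal{M}[u(t)]=\langle u,[\mathcal{H},\,d+2x\cdot\nabla]u\rangle_2$, which I would evaluate term by term. The kinetic part gives $[-\frac12\Delta,\,2x\cdot\nabla]=-2\Delta$, contributing $2\int_{\R^d}|\nabla u|^2\,dx$; the potential part, via $[V,x\cdot\nabla]=-\langle\nabla V,x\rangle$, contributes $-2\int_{\R^d}\langle\nabla V(x),x\rangle|u|^2\,dx$; and the nonlinear part, via $[|u|^{2\sigma},x\cdot\nabla]=-x\cdot\nabla|u|^{2\sigma}$ followed by an integration by parts using $|u|^2\,x\cdot\nabla(|u|^{2\sigma})=\frac{\sigma}{\sigma+1}x\cdot\nabla(|u|^{2\sigma+2})$ together with $\int_{\R^d}x\cdot\nabla g\,dx=-d\int_{\R^d}g\,dx$, contributes $-\frac{2d\sigma\lambda}{\sigma+1}\int_{\R^d}|u|^{2\sigma+2}\,dx$. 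The rotation again contributes nothing, since the dilation field $x\cdot\nabla$ commutes with $L_z$ (both are vector fields whose Lie bracket vanishes, rotations preserving the radial direction), so $[L_z,\,d+2x\cdot\nabla]=0$. Summing the three surviving terms yields exactly the claimed identity; as an internal check, the right-hand side equals $2P(u(t))$.

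If one prefers to avoid operator language, the same result follows by two explicit integrations by parts: after substituting $u_t=-\textnormal{i}\mathcal{H}u$ into $\frac{d}{dt}\mathcal{M}=2\operatorname{Re}\int_{\R^d}|x|^2\overline{u}\,u_t\,dx$, the real factors $V|u|^2$ and $\lambda|u|^{2\sigma+2}$ are annihilated by $\operatorname{Im}$, and the rotation term $\int_{\R^d}|x|^2\overline{u}\,L_zu\,dx$ is shown to be real — hence also killed by $\operatorname{Im}$ — by transferring $L_z$ across the inner product and invoking $L_z|x|^2=0$; differentiating $2\operatorname{Im}\int_{\R^d}x\cdot\overline{u}\,\nabla u\,dx$ in $t$ then reproduces the same three surviving terms.

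The main obstacle is not the algebra but the rigor: a priori $\mathcal{M}[u(t)]$ is only known to be finite, and the formal steps require differentiation under the integral and several integrations by parts that presuppose decay and regularity an $\mathcal{H}^1$ solution may lack. I would handle this by the standard regularization: replace $|x|^2$ by a family of bounded cut-offs $\chi_R$ with $\chi_R\to|x|^2$ and uniformly controlled derivatives, establish the identity for $\mathcal{M}_R[u]:=\int_{\R^d}\chi_R|u|^2\,dx$ (where every term is legitimate), and pass to the limit $R\to\infty$. Finiteness of $\mathcal{M}[u]$ and of the limiting integrals is guaranteed by the super-quadratic growth of $V$, which controls $\||x|u\|_2$ exactly as in \eqref{lu}; this is precisely where Assumption \ref{a1} enters. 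The local well-posedness in Lemma \ref{lwp}, together with a density/approximation argument by smoother solutions, justifies the time differentiation and the passage to the limit.
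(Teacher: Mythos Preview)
Your argument is correct. The commutator organization makes the disappearance of the rotation term especially transparent: $[|x|^2,L_z]=0$ handles the first derivative and $[x\cdot\nabla,L_z]=0$ the second, which is exactly the geometric fact that rotations commute with both the weight $|x|^2$ and the dilation generator. The term-by-term evaluation of $\langle u,[\mathcal{H},d+2x\cdot\nabla]u\rangle$ is accurate, including the nonlinear piece via $|u|^2\,x\cdot\nabla(|u|^{2\sigma})=\frac{\sigma}{\sigma+1}x\cdot\nabla(|u|^{2\sigma+2})$.

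The paper's proof reaches the same identity but is organized differently: it follows Cazenave's regularization scheme, replacing $|x|^2$ by $e^{-\eps|x|^2}|x|^2$ (and $x$ by $e^{-\eps|x|^2}x$ in the first-moment expression), carrying out the integrations by parts explicitly at the level of $u$ and $u_t$ rather than through commutators, and passing to the limit $\eps\to 0$; it then upgrades from $\mathcal{H}^2$ data to $\mathcal{H}^1$ data by density and continuous dependence. Your proposed regularization via bounded cut-offs $\chi_R\to|x|^2$ is an equally standard alternative and works for the same reason --- the super-quadratic growth of $V$ controls $\||x|u\|_2$ uniformly, as you note through \eqref{lu}. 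What the commutator viewpoint buys is a cleaner bookkeeping of why each non-kinetic term either vanishes or produces exactly one surviving contribution; what the paper's approach buys is that every manipulation is already written out at the level of integrals, so one can point directly to the specific estimates in \cite{Caz} that justify each limit. Either route is complete once the density step (smoother solutions approximating $\mathcal{H}^1$ solutions) is invoked, and you correctly flag that step as the place where the real analytic work lies.
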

	\begin{proof} 
	{To establish the desired conclusion, we shall follow closely the regularization arguments developed in the proof of \cite[Proposition 6.5.1]{Caz}. Let us start with computing the first derivative of 
$\mathcal{M}[u(t)]$. Define
$$
\mathcal{M}_{\eps}[u(t)]:=\int_{\R^d} e^{-2\eps |x|^2} |x|^2 |u(t, x)|^2 \,dx, \quad \eps>0.
$$	
It is simple to find that
$$
\frac{d}{dt}\mathcal{M}_{\eps}[u(t)]=2 \mbox{Im} \int_{\R^d} e^{-2\eps |x|^2}\left(1-2\eps|x|^2\right)\left(\nabla u \cdot x\right) \overline{u} \,dx.
$$
Making use of the elements presented in the proof of \cite[Proposition 6.5.2]{Caz} and taking the limit as $\eps \to 0$, we then have that
$$
\frac{d}{dt}\mathcal{M}[u(t)]=2 \mbox{Im} \int_{\R^d}  \left(\nabla u \cdot x\right) \overline{u} \,dx.
$$
Next we shall calculate the second derivative of $\mathcal{M}[u(t)]$. Let us first assume that $u_0 \in \mathcal{H}^2$ and $u \in C([0, T_{max}), \mathcal{H}^2) \cap C^1((0, T_{max}), L^2(\R^d)) $ is the solution to \eqref{equt} with the initial data $u_0$, where the space $\mathcal{H}^2$ is defined by the completion of $C_0^{\infty}(\R^d)$ under the norm
$$
\|u\|_{\mathcal{H}^2}:=\|u\|_{\mathcal{H}^1} +\|\Delta u\|_2.
$$
Define
$$
\widetilde{\mathcal{M}}_{\eps}(t):=2\mbox{Im} \int_{\R^d} e^{-\eps |x|^2}\left(\nabla u \cdot x\right) \overline{u} \,dx.
$$
Proceeding as Step 1 in the proof of \cite[Proposition 6.5.1]{Caz} and using the density, we may assume without restrcition that $u \in C^1((0, T_{max}), \mathcal{H}(\R^d))$. It then follows that

\begin{align*}
			\frac{d}{dt}\widetilde{\mathcal{M}}_{\eps}(t)=2 \mbox{Im} \int_{\R^d} e^{-\eps |x|^2} \left(\nabla u_t \cdot x\right) \overline{u} \,dx+2 \mbox{Im} \int_{\R^d} e^{-\eps |x|^2} \left(\nabla u \cdot x\right) \overline{u}_t \,dx.
		\end{align*}
		Observe that
		$$
	e^{-\eps |x|^2} \left(\nabla u_t \cdot x\right) \overline{u} =\nabla \cdot (x e^{-\eps |x|^2}  u_t  \overline{u})-de^{-\eps |x|^2}  u_t  \overline{u}- e^{-\eps |x|^2} \left(\nabla \overline{u} \cdot x\right) u_t+2 \eps |x|^2 e^{-\eps |x|^2} u_t  \overline{u}.
		$$
Consequently, it holds that
		$$
	\frac{d}{dt}\widetilde{\mathcal{M}}_{\eps}(t)=2 \mbox{Re} \int_{\R^d} \left(H u-\Omega L_z u-\lambda |u|^{2\sigma} u\right) \left(2 \left(\nabla \overline{u} \cdot x\right) +d \overline{u} - 2 \eps |x|^2 \overline{u}\right) e^{-\eps |x|^2} \,dx.
		$$
	At this point, invoking the divergence theorem and adapting the ingredients presented in Step 1 of the proof of \cite[Proposition 6.5.1]{Caz}, we are able to show that
	\begin{align*}
	\frac{d}{dt}\widetilde{\mathcal{M}}_{\eps}(t)=2\int_{\R^d} |\nabla u|^2 \,dx-2 \int_{\R^d} \langle \nabla V(x), x \rangle |u|^2 \,dx-\frac{2d \sigma \lambda}{\sigma +1} \int_{\R^d} |u|^{2 \sigma +2} \,dx+o_{\eps}(1).
	\end{align*}
		Taking the limit as $\eps \to 0$, we now derive that
		$$
	\frac{d^2}{dt^2}\mathcal{M}[u(t)]=2\int_{\R^d} |\nabla u|^2 \,dx-2 \int_{\R^d} \langle \nabla V(x), x \rangle |u|^2 \,dx-\frac{2d \sigma \lambda}{\sigma +1} \int_{\R^d} |u|^{2 \sigma +2} \,dx.
		$$
		If $u_0 \in \mathcal{H}^1$, then we know that there exists $\{(u_0)_n\} \subset \mathcal{H}^2$ such that $(u_0)_n \to u_0$ in $\mathcal{H}^1$ as $n \to \infty$. Let $\{u_n\} \subset C([0, T_{max}), \mathcal{H}^2) \cap C^1((0, T_{max}), L^2(\R^d))$ be the solutions to \eqref{equt} to the initial data $\{(u_0)_n\}$. Define
		$$
	       U_n(t):=2\int_{\R^d} |\nabla u_n|^2 \,dx-2 \int_{\R^d} \langle \nabla V(x), x \rangle |u_n|^2 \,dx-\frac{2d \sigma \lambda}{\sigma +1} \int_{\R^d} |u_n|^{2 \sigma +2} \,dx.
		$$
		We then conclude that
		\begin{align}  \label{v111}
	       \int_{\R^d} |x|^2 |u_n|^2 \,dx= \int_{\R^d} |x|^2 |(u_0)_n|^2 \,dx + 4 t \mbox{Im} \int_{\R^d}  \left(\nabla {u_n} \cdot x\right) \overline{u_n} \,dx + 2\int_0^t \int_0^s U_n(s) \,dsdt.
		\end{align}
		In addition, arguing as the proof of \cite[Corollary 6.5.3]{Caz}, we are able to derive that $x u_n \to x u$ in $ C([0, T_{max}), L^2(\R^d))$ as $n \to \infty$.
	Therefore, using the continuous dependence and taking the limit as $n \to \infty$ in \eqref{v111}, we obtain the desired conclusion. This completes the proof.
 			}
	\end{proof}
	
	\begin{proof}[Proof of Theorem \ref{unstable}] Let $v_{m,l} \in S(m,l)$ be the solution to \eqref{equ} satisfying \eqref{c} at the level $\gamma(m,l)$ obtained in Theorem \ref{existence}. We are going to show that $v_{m,l}$ is strongly unstable. Note that $(v_{m,l})_s \to v_{m,l}$ as $s \to 1^-$ in $\mathcal{H}^1$. Let $s>1$ be such that $s$ is enough close to $1$. Define $u_0=(v_{m,l})_s$. Then, by Lemma \ref{maxi}, it holds that $E(u_0)<\gamma(m,l)$ and $P(u_0)<0$. And there exists $0<s_{u_0, 2}<1$ such that $(u_0)_{s_{u_0, 2}} \in \mathcal{P}^-(m,l)$. Let $u \in C([0, T_{max}), \mathcal{H}^1)$ be the solution to \eqref{equt} with initial datum $u_0$. We shall prove that $u(t)$ blows up in finite time. Since the function $s \to E((u_0)_s)$ is concave on $[s_{u_0,2}, 1]$, then
		$$
		E(u_0)-E((u_0)_{s_{u_0, 2}})=\frac{d}{ds} E((u_0)_s) \mid_{s=\xi} (1-s_{u_0, 2}) \geq \frac{d}{ds} E((u_0)_s) \mid_{s=1} (1-s_{u_0,2})=P(u_0)(1-s_{u_0, 2}), 
		$$
		where $s_{u_0, 2} \leq \xi \leq 1$. This gives that
		\begin{align} \label{bbelow}
			E(u_0) \geq \gamma(m,l)+P(u_0)(1-{s_{u_0 ,2}})> \gamma(m,l)+P(u_0).
		\end{align}
		Next we are going to conclude that $P(u(t))<-\delta$ for any $0 \leq t<T_{max}$, where $\delta:=\gamma(m,l)-E(u_0)>0$. We now assume by contradiction that there exists $0<t_1<T_{max}$ such that $s_{u(t_1), 2}=1$, $0<s_{u(t), 2}<1$ for $0<t<t_1$ and $P(u(t_1))=0$. Reasoning as the proof of \eqref{bbelow} and using the conservation of the energy, we then have that
		$$
		P(u(t)) \leq E(u(t))-\gamma(m,l)=E(u_0)-\gamma(m,l)=-\delta<0, \quad 0<t<t_1.
		$$
		Hence it holds that $P(u(t_1)) \leq -\delta<0$. This is impossible. It then implies that $0<s_{u(t), 2}<1$ and $P(u(t))<0$ for any $0 \leq t<T_{max}$. Thus, arguing as the proof of \eqref{bbelow}, we obtain that $P(u(t))<-\delta$ for any $0\leq t<T_{max}$. At this point, using Lemma \ref{virial}, we get that
		$$
		\frac{d^2}{dt^2}\mathcal{M}[u(t)]=2 P(u(t))<-2 \delta, \quad 0 \leq t <T_{max},
		$$
		which readily infers that the solution $u$ blows up in finite time. This completes the proof. 
	\end{proof}

	\section{Regularity of Minimizers} \label{regularity}
	
	In this section, as a complement to the study carried out in \cite{NSS}, we shall further investigate regularity of global minimizers to \eqref{gmin1}. As a direct consequence of the discussion, we obtain the same result for the local minimizers to \eqref{lmin1}. We say that $u \in S(m,l)$ is \emph{regular} if $G_1'(u)$ and $G_2'(u)$ are linearly independent. The main result of this section reads as follows.
	
	
	\begin{prop} \label{thm4}
		Let $u \in S(m,l)$ be a minimizer to \eqref{gmin1}. If $\frac {l}{m} \not\in \mathbb{\Z}$, then $u$ is regular. In addition, regularity also holds if $\frac{l}{m}=n\in \Z$ and if $u$ is not an eigenfunction of $L_z$ at eigenvalue $n$. On the other hand, if $u$ is the eigenfunction of $L_z$ at some eigenvalue $n \in \Z$, then $u$ solves \eqref{equ} with $\Omega=0$, i.e.
		\begin{align} \label{equu}
			-\frac 12 \Delta u + V(x) u-\omega u=\lambda |u|^{2\sigma} u.
		\end{align}
	\end{prop}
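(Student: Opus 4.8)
My plan is to convert the regularity condition into a purely algebraic condition on $u$, dispatch the first two assertions by contraposition, and then treat the genuinely degenerate case separately.

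First I would compute the constraint differentials. Pairing with the real inner product $\mathrm{Re}\langle\cdot,\cdot\rangle_2$, one has $G_1'(u)\leftrightarrow 2u$ and, using the self-adjointness of $L_z$, $G_2'(u)\leftrightarrow 2L_z u$. Hence $G_1'(u)$ and $G_2'(u)$ are linearly dependent over $\R$ if and only if there is $(a,b)\neq(0,0)$ with $a u + b L_z u = 0$. Since $u\neq 0$ (because $m>0$), necessarily $b\neq 0$, so dependence is equivalent to $L_z u = c u$ for the real scalar $c=-a/b$, i.e. $u$ being an eigenfunction of $L_z$. Expanding in the angular Fourier modes $e^{ik\theta}$ (with $L_z=-\mathrm{i}\partial_\theta$) shows the only eigenvalues are integers, so $c=n\in\Z$, and then $l=\langle u, L_z u\rangle_2 = n\|u\|_2^2 = nm$, giving $l/m=n\in\Z$. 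This already yields the first two claims: if $\frac{l}{m}\notin\Z$, dependence is impossible, so $u$ is regular; and if $\frac{l}{m}=n\in\Z$ but $u$ is not an eigenfunction of $L_z$ at $n$, then dependence (which would force $u$ to be an eigenfunction with eigenvalue exactly $l/m=n$) is again impossible, so $u$ is regular.

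The main obstacle is the last assertion, where $u\in V_n:=\ker(L_z-n)$ and the constraint qualification fails: the standard Lagrange multiplier rule does not apply, and a naive first-order variation does not close (along $u+th$ with $\langle u,h\rangle_{\R}$-orthogonality the angular momentum defect is $t^2\langle h,(L_z-n)h\rangle$, which no simple correction removes). The idea to bypass this is to restrict to the $L_z$-invariant subspace $V_n$, on which the angular-momentum constraint becomes \emph{redundant}: for $w\in V_n$ one has $L(w)=\langle w,L_z w\rangle_2=n\|w\|_2^2$, so $S(m)\cap V_n\subset S(m,l)$. Since $u\in S(m)\cap V_n$ and $u$ minimizes $E$ over the larger set $S(m,l)$ (problem \eqref{gmin1}), it also minimizes $E$ over $S(m)\cap V_n$, which is simply a sphere in the closed complex subspace $V_n$ subject to the \emph{single}, non-degenerate mass constraint (its gradient $2u\neq 0$). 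The ordinary Lagrange rule then produces $\omega\in\R$ with $\langle E'(u),h\rangle = 2\omega\,\mathrm{Re}\langle u,h\rangle_2$ for all $h\in V_n$.

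To upgrade this from variations in $V_n$ to the full Euler--Lagrange equation \eqref{equu}, I would show that $g:=E'(u)-2\omega u$ lies in $V_n$. This is where the eigenfunction structure is essential: writing $u=f(r,z)e^{\mathrm{i}n\theta}$, the modulus $|u|=|f|$ is $\theta$-independent, so $|u|^{2\sigma}u=|f|^{2\sigma}f\,e^{\mathrm{i}n\theta}\in V_n$; moreover $-\tfrac12\Delta$ and multiplication by the radial potential $V$ both commute with $L_z$ and hence preserve $V_n$. Therefore every term of $E'(u)=-\tfrac12\Delta u+Vu-\lambda|u|^{2\sigma}u$ lies in $V_n$, so $g\in V_n$. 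Since $V_n$ is a complex subspace, replacing $h$ by $\mathrm{i}h$ promotes the real identity above to $\langle g,h\rangle=0$ for all $h$ in the dense subspace $V_n\cap\mathcal{H}^1$ of $V_n$; as $g\in V_n$ annihilates $V_n$, we get $g=0$, i.e. $u$ solves \eqref{equu} with $\Omega=0$. The routine points are the gradient computations, the integrality of the spectrum of $L_z$, and the density/duality bookkeeping in the last step; the conceptual core is the reduction to $V_n$ and the invariance $E'(V_n)\subset V_n$.
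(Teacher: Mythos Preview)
Your proof is correct and follows essentially the same strategy as the paper: the first two assertions are dispatched identically (linear dependence forces $L_zu=\tfrac{l}{m}u$, and $\sigma(L_z)=\Z$), and the degenerate case is handled by restricting to the eigenspace $V_n=\mathcal L_n$, reducing to a single mass constraint there, and then extending the Euler--Lagrange equation from $V_n$ to all of $\mathcal H^1$. The only cosmetic difference is that the paper carries out the last step by passing explicitly to cylindrical coordinates, writing the reduced functional $E_n(f)$ for $u=f(r,z)e^{\mathrm{i}n\varphi}$, and then invoking the orthogonality of the $\mathcal L_k$, whereas you argue the equivalent fact more abstractly via the invariance $E'(V_n)\subset V_n$; both arguments encode the same angular-Fourier orthogonality.
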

	\begin{proof}
		Let $\lambda_1, \lambda_2 \in \R$ be such that 
		$$
		\lambda_1 G_1'(u)+\lambda_2 G_2'(u)=0\,\,\,\Longleftrightarrow\,\,\, \lambda_1 u + \lambda_2 L_z u=0.
		$$
		Since $u \in S(m,l)$, then $\lambda_1 m + \lambda_2l=0$. If $\lambda_2=0$, then $\lambda_1=0$. This implies that $u$ is regular and the proof is complete. Now we assume that $\lambda_2 \neq 0$. This shows that
		$$
		L_z u=-\frac{\lambda_1}{\lambda_2} u=\frac{l}{m} u.
		$$
		Observe that the operator $L_z$ is essentially self-adjoint on $\mathcal{H}^1$ with purely discrete spectrum $\sigma(L_z)=\mathbb{Z}$. This immediately implies that if $\frac{l}{m} \not\in \mathbb{Z}$, then $u$ is regular. Moreover, if $\frac{l}{m} \in \mathbb{Z}$ and $u$ is not an eigenfunction of $L_z$, then $u$ is regular as well. Let us now suppose that $u$ is an eigenfunction of $L_z$ corresponding to some eigenvalue $n \in \mathbb{N}$. This means that $u$ satisfies the eigenvalue problem $L_z u=n u$ with $n=\frac{l}{m}$. We denote by $\mathcal{L}_n$ the associated eigenspace defined by
		$$
		\mathcal{L}_n:=\left\{u \in \mathcal{H}^1 : L_z u=n u \right\}.
		$$
		Since $u \in S(m,l) \cap \mathcal{L}_n$ is a minimizer to \eqref{gmin1}, then it is a minimizer to the minimization problem
		\begin{align} \label{gmin11}
			e_n(m):=\inf_{u \in S(m) \cap \mathcal{L}_n} E(u),
		\end{align}
		where
		$$
		S(m):=\left\{u \in \mathcal{H}^1 : M(u)=m\right\}.
		$$
		Next we are going to demonstrate that $u$ satisfies \eqref{equu}. For simplicity, we shall only deal with the case in $\R^3$. In cylindrical coordinates $(r, \varphi, z) \in \R^3$, we have that $L_z=-\textnormal{i} \partial_{\varphi}$. Here we denote by $L^2_{cyl}$ the weighted $L^2$-space defined by $L^2_{cyl}:=L^2((0, +\infty) \times \R, r drdz)$ and denote by $\mathcal{H}^1_{cyl}$ the corresponding Sobolev space defined by
		$$
		\mathcal{H}^1_{cyl}:=\left\{f  \in L^2_{cyl} : \nabla_{r, z} f, \, \frac f r, \, V^{1/2} f \in L^2_{cyl}\right\},
		$$
		where $\nabla_{r, z}:=(\partial_r, \partial_z)$ and $V=V(x)$ is identified with $V(r, z)$ in cylindrical coordinates. Since $u \in \mathcal{L}_n$, then there exists $f \in \mathcal{H}^1_{cyl}$ such that 
		\begin{align} \label{uf}
			u(r,\varphi,z)=f(r, z) e^{\textnormal{i} n\varphi}.
		\end{align}
		It then follows that
		$$
		\int_{\R^3} |u|^2 \,dx=2\pi \int_{\R} \int_0^{\infty} |f|^2 r \,drdz,
		$$
		$$
		\int_{\R^3} |\nabla u|^2 \,dx =2 \pi \int_{\R}\int_0^{\infty}\left(|\nabla_{r,z} f|^2 + \frac{n^2}{r^2} |f|^2\right) r \,drdz,
		$$
		where
		$$
		\left(\begin{matrix}
			\partial_x u \\ \partial_y u
		\end{matrix}\right)=e^{\textnormal{i}n\varphi}
		\left(\begin{matrix}
			\cos\varphi & -\sin\varphi\\
			\sin\varphi & \cos\varphi
		\end{matrix}\right) \,
		\left(\begin{matrix}
			\partial_r f \\ \tfrac{\textnormal{i}n}{r} f
		\end{matrix}\right).
		$$
		Since $u \in S(m) \cap \mathcal{L}_n$ is a minimizer to \eqref{gmin11}, then $f \in S_{cyl}(m)$ given by \eqref{uf} is a minimizer to the minimization problem
		$$
		e_n(m)=\inf_{f \in S_{cyl}(m)} E_n(f),
		$$
		where
		$$
		E_n(f)=2\pi\int_{\R}\int_0^\infty \left(\frac12 |\nabla_{r,z} f|^2+\frac{n^2}{2r^2} |f|^2+V |f|^2
		-\frac{\lambda}{\sigma+1} |f|^{2\sigma+2}\right)\,rdrdz,
		$$
		$$
		S_{cyl}(m):=\left\{f \in \mathcal{H}^1_{cyl} : M_{cyl}(f)=m\right\}, \quad M_\text{cyl}(f):=2\pi\int_{\R}\int_0^\infty |f|^2\,rdrdz.
		$$
		We infer that there exists $\omega \in \R$ such that $f \in S_{cyl}(m)$ satisfies the equation
		\begin{align} \label{eqf}
			-\frac{1}{2}\nabla_{r,z}^2f+\frac{n^2}{2r^2}f+Vf-\omega f=\lambda|f|^{2\sigma}f
		\end{align}
		weakly in $\mathcal H^1_\text{cyl}$. Multiplying \eqref{eqf} by $e^{\textnormal{i}n \varphi}$ and converting back to Cartesian coordinates then shows that $u \in S(m) \cap \mathcal{L}_n$ solves the equation
		\begin{align} \label{eq}
			-\frac{1}{2}\Delta u+V u-\omega u=\lambda|u|^{2\sigma}u
		\end{align}
		weakly in $\mathcal L_n$. Finally, noting that the orthogonality of $\mathcal L_k$ and $\mathcal L_n$ for $k\not=n$, we conclude that \eqref{eq} holds weakly in $\mathcal {H}^1$. This completes the proof.
	\end{proof}
	
	\begin{rem}
		It follows from Proposition \ref{thm4} and Lagrange’s theorem that if $u \in S(m,l)$ is a regular minimizer to \eqref{gmin1}, then there exist $\omega, \Omega \in \R$ such that $u$ satisfies \eqref{equ}. However, the existence of a minimizing profile $f \in S_{cyl}(m)$ only implies the corresponding $u\in \mathcal{L}_{n}$ being the minimizer of the reduced problem \eqref{gmin11}, not necessarily \eqref{gmin1}. Whether or not the doubly constrained problem \eqref{gmin1} admits non-regular minimizer $u\in \mathcal{L}_n$ remains an open problem.
	\end{rem}
	
	As a straightforward consequence of the arguments adapted to prove Proposition \ref{thm4}, we are also able to conclude the regularity of local minimizers to \eqref{lmin1}. 
	
	\begin{cor} \label{thm5}
		Let $u \in B(r)$ be a minimizer to \eqref{lmin1}, then Proposition \ref{thm4} remains true.
	\end{cor}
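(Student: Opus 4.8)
The plan is to observe that the proof of Proposition \ref{thm4} never uses the \emph{global} minimality of $u$ on $S(m,l)$; it uses only that $u$ is a constrained critical point of $E$ together with the spectral structure of $L_z$. For a minimizer $u \in B(r)$ of \eqref{lmin1}, Theorem \ref{thm1} (via Lemma \ref{mps0}) guarantees $\|u\|^2 < r/2$, so $u$ lies in the interior of $B(r)$; hence any sufficiently small admissible variation preserving the constraints remains in $B(r)$, and $u$ is a genuine constrained critical point of $E$ on $S(m,l)$. Since every assertion of Proposition \ref{thm4} is a first-order (critical point) statement, local minimality is just as good as global minimality, and this is the crux of the transfer.

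First I would dispose of the two regularity cases. Suppose $\lambda_1 G_1'(u) + \lambda_2 G_2'(u)=0$, equivalently $\lambda_1 u + \lambda_2 L_z u = 0$. Pairing with $u$ and using $u \in S(m,l)$ gives $\lambda_1 m + \lambda_2 l = 0$. If $\lambda_2 = 0$ then $\lambda_1 = 0$ and $u$ is regular; otherwise $L_z u = (l/m)u$, so $u$ is an eigenfunction of $L_z$ at eigenvalue $l/m$. Because $\sigma(L_z)=\Z$, the cases $l/m \notin \Z$, and $l/m = n \in \Z$ with $u \notin \mathcal{L}_n$, force regularity exactly as in Proposition \ref{thm4}; these steps invoke only $u \in S(m,l)$ and the spectrum of $L_z$, so they carry over verbatim with no reference to the minimization problem at all.

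It then remains to treat the eigenfunction case $u \in \mathcal{L}_n$ with $n = l/m \in \Z$. The key point is that on $\mathcal{L}_n$ the angular-momentum constraint is redundant: for $v \in \mathcal{L}_n$ one has $\langle L_z v, v \rangle_2 = n\|v\|_2^2$, so $M(v)=m$ already forces $\langle L_z v, v\rangle_2 = nm = l$. Consequently the interior local minimizer $u$ of $E$ on $B(r)$, restricted to $\mathcal{L}_n$, is a local minimizer of $E$ on $S(m)\cap \mathcal{L}_n$, i.e. a critical point of a \emph{singly} constrained problem. Passing to cylindrical coordinates and writing $u = f(r,z)e^{\textnormal{i}n\varphi}$ exactly as in the proof of Proposition \ref{thm4}, $f$ becomes a local minimizer of $E_n$ on $S_{cyl}(m)$ and hence solves the reduced equation \eqref{eqf} with a single Lagrange multiplier $\omega$; multiplying by $e^{\textnormal{i}n\varphi}$, converting back to Cartesian coordinates, and invoking the orthogonality of the eigenspaces $\mathcal{L}_k$ then yields \eqref{equu} weakly in $\mathcal{H}^1$.

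The main obstacle, and the only genuine point requiring care, is the interiority of $u$ in $B(r)$: this is precisely what legitimizes taking two-sided variations within $\mathcal{L}_n$ (rather than merely one-sided variations that decrease $\|u\|$), and thus the passage to the full Euler--Lagrange equation rather than a variational inequality. Once interiority is secured through Theorem \ref{thm1}, every remaining step — the collapse of the double constraint to the single mass constraint on $\mathcal{L}_n$, the cylindrical computation, and the orthogonal-decomposition argument extending the weak equation from $\mathcal{L}_n$ to $\mathcal{H}^1$ — is identical to the global case, so Proposition \ref{thm4} holds verbatim for $u$.
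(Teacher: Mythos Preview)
Your proof is correct and mirrors the paper's approach, which simply asserts that the arguments of Proposition \ref{thm4} adapt verbatim to the local minimizer. One caution: cite Lemma \ref{mps0} directly for the interiority $\|u\|^2 \leq r/2 < r$ rather than Theorem \ref{thm1}, since the paper's proof of Theorem \ref{thm1} itself invokes Corollary \ref{thm5}, and routing through it creates an appearance of circularity that your parenthetical ``(via Lemma \ref{mps0})'' already resolves in substance.
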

	
      {
      \begin{rem}
       We are only able to show the regularity of minimizers to \eqref{lmin1}. However, the regularity of mountain pass type solutions is unknown.
     \end{rem}
      }

	\section*{Conflict of Interest Statement}
	
	The authors declare that there is no conflict of interest.
	
	\section*{Data Availability Statement}
	
	We affirm that our paper has no associated data.

\end{document}